\newcommand{\veps}{\varepsilon}
\def\RR{\mathbb R}
\def\NN{\mathbb N}
\def\dimh{\dim_{\mathcal H}}
\newcommand{\north}{\mathbf N}
 \newcommand{\bd}{B_{d+1}}
\newcommand{\sd}{\mathcal S_{d}}
\newcommand{\rd}{\mathbb R^{d+1}}
\begin{document}
\title{Boundary multifractal behaviour for harmonic functions in the ball
}


\author{Fr\'ed\'eric Bayart       \and
        Yanick Heurteaux 
}


\institute{F. Bayart \at
              Clermont Universit\'e, Universit\'e Blaise Pascal, Laboratoire de Math\'ematiques, BP 10448, F-63000 CLERMONT-FERRAND -
CNRS, UMR 6620, Laboratoire de Math\'ematiques, F-63177 AUBIERE \\
              \email{Frederic.Bayart@math.univ-bpclermont.fr}           
           \and
           Y. Heurteaux \at
              Clermont Universit\'e, Universit\'e Blaise Pascal, Laboratoire de Math\'ematiques, BP 10448, F-63000 CLERMONT-FERRAND -
CNRS, UMR 6620, Laboratoire de Math\'ematiques, F-63177 AUBIERE \\
              \email{Yanick.Heurteaux@math.univ-bpclermont.fr}\\
              Tel.: +33 4 73 40 50 64\\
              Fax: +33 4 73 40 79 72\\
}

\date{Received: date / Accepted: date}

\maketitle
\begin{abstract}
It is well known that if $h$ is a nonnegative harmonic function in the ball of
$\mathbb R^{d+1}$ or if $h$ is harmonic in the ball with integrable
boundary values, then the radial limit of $h$ exists at almost every point of the
boundary. In this paper, we are interested in the exceptional set of points
of divergence and in the speed of divergence at these points. In particular, we prove that
for generic harmonic functions and for any $\beta\in [0,d]$, the Hausdorff
dimension of the set of points
$\xi$ on the sphere such that 
$h(r\xi)$ looks like $(1-r)^{-\beta}$ is equal to $d-\beta$. 
\keywords{Boundary behaviour \and Multifractal analysis \and Genericity}
\subclass{31B25 \and 35C15}
\end{abstract}

\maketitle

\section{Introduction}
The story of this paper begins in 1906, when P. Fatou proved in \cite{Fat03} 
that bounded
harmonic functions in the unit disk have nontangential limits almost 
everywhere on the circle.
Later on, this result was improved by Hardy and Littlewood in dimension 2, 
and by Wiener, Bochner and many others in arbitrary dimension (a complete
historical account can be found in \cite{Ste}). Let us also mention R. Hunt
and R. Wheeden who proved that a similar result holds for nonnegative
harmonic functions in Lipschitz domains (\cite{HW1,HW2}). 
To state the result of the 
nontangential convergence of harmonic functions in the ball, 
we need to introduce some terminology.

Let $d\geq 1$ and let $\sd$ (resp. $B_{d+1}$) be the (euclidean) unit sphere 
(resp. the unit ball) in $\mathbb R^{d+1}$. The euclidean norm in $\rd$ will 
be denoted by $\|\cdot\|$.  For $\mu\in\mathcal M(\sd)$, the set of complex Borel 
measures on $\sd$, the Poisson integral of $\mu$, denoted by $P[\mu]$, is 
the function on $B_{d+1}$ defined
by 
$$P[\mu](x)=\int_{\sd}P(x,\xi)d\mu(\xi),$$
where $P(x,\xi)$ is the Poisson kernel,
$$P(x,\xi)=\frac{1-\|x\|^2}{\|x-\xi\|^{d+1}}.$$
When $f$ is a function in $L^1(\sd)$, we denote simply by $P[f]$ the function 
$P[fd\sigma]$.
Here and elsewhere, $d\sigma$ denotes the normalized Lebesgue measure 
on $\sd$. 
For any $\mu\in\mathcal M(\sd)$, $P[\mu]$ is a harmonic function in $B_{d+1}$ 
and it is well known that, for instance, every bounded harmonic function in 
$B_{d+1}$ is the Poisson integral
$P[f]$ of a certain $f\in L^{\infty}(\sd)$. It is also well known that every 
nonnegative  harmonic function in $\bd$ is the Poisson integral $P[\mu]$ of a 
positive finite measure $\mu\in\mathcal M(\sd)$.

The Fatou theorem for Poisson integrals of $L^1$-functions says that, 
given a function $f\in L^1(\sd)$, 
then $P[f](ry)$ tends to $f(y)$  for almost every $y\in\sd$ when $r$ 
increases to 1. More generally, if $\mu\in\mathcal M(\sd)$, $P[\mu](ry)$ 
tends to $\frac{d\mu}{d\sigma}(y)$ almost everywhere and in fact, the 
limit exists for nontangential access.

In this paper, we are interested in the radial behaviour on exceptional sets, 
and especially in the
following questions. How quickly can  $P[f](ry)$ grow? For a prescribed growth 
$\tau(r)$, 
how big can be the sets of $y\in\sd$ such that 
$\limsup_{r\to 1}|P[f](ry)|/\tau(r)=+\infty$?
It is easy to see that the growth cannot be too fast. Indeed, the Poisson 
kernel satisfies, for any $y,\xi\in\sd$,
$$P(ry,\xi)\leq\frac2{(1-r)^d},$$
so that for any $f\in L^1(\sd)$, for any $y\in\sd$ and any $r\in(0,1)$,
$$P[f](ry)\leq \frac{2\|f\|_1}{(1-r)^d}.$$
This motivates us to introduce, for a fixed $\beta\in(0,d)$ and any 
$f\in L^1(\sd)$, the exceptional set
$$\mathcal E(\beta,f)=\left\{y\in\sd;\ \limsup_{r\to 1}\frac{|P[f](ry)|}
{(1-r)^{-\beta}}=+\infty\right\},$$
and we ask for the size of $\mathcal E(\beta,f)$. To measure the size of 
subsets of $\sd$, we
shall use the notion of Hausdorff dimension (see Section \ref{SECPREL} 
for precise definitions).
Our first main result is the following.
\begin{theorem}\label{THMMAIN1}
Let $\beta\in[0,d]$ and let $f\in L^1(\sd)$. Then 
$\dim_\mathcal H\big(\mathcal E(\beta,f)\big)\leq d-\beta$. Conversely, 
given a subset $E$ of $\sd$ such that $\dimh (E)<d-\beta$, 
there exists $f\in L^1(\sd)$ such that $E\subset \mathcal E(\beta,f)$.
\end{theorem}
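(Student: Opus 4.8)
\medskip\noindent\emph{Strategy for Theorem~\ref{THMMAIN1}.}
The plan is to recast both inequalities in terms of the \emph{upper $(d-\beta)$-density} at a point $\xi\in\sd$ of the positive measure $\mu:=|f|\,d\sigma$, namely
$$\overline D_\mu(\xi):=\limsup_{\rho\to 0}\frac{\mu\big(B(\xi,\rho)\big)}{\rho^{\,d-\beta}},$$
where $B(\xi,\rho)\subset\rd$ is the ball of centre $\xi$ and radius $\rho$. Two extreme cases are immediate: if $\beta=d$, the bound $P[f](ry)\le 2\|f\|_1(1-r)^{-d}$ shows that $\mathcal E(d,f)=\emptyset$; if $\beta=0$, there is nothing to prove since $\dimh(\sd)=d$. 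So from now on I would assume $0<\beta<d$, and since $|P[f]|\le P[|f|]=P[\mu]$ it is enough to work with $P[\mu]$.

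\medskip\noindent\textbf{Upper bound.} The first step is the pointwise estimate
$$P[\mu](r\xi)\ \le\ C_{d,\beta}\,(1-r)^{-\beta}\,\sup_{\rho>0}\frac{\mu\big(B(\xi,\rho)\big)}{\rho^{\,d-\beta}}\qquad(\xi\in\sd,\ 0<r<1).$$
One obtains it from the identity $\|r\xi-\eta\|^2=(1-r)^2+r\|\xi-\eta\|^2$, the crude bounds $1-r^2\le 2(1-r)$ and (for $r\ge 1/2$) $\|r\xi-\eta\|\gtrsim (1-r)+\|\xi-\eta\|$, and a dyadic splitting of $\sd$ into the cap $B(\xi,1-r)$ and the annuli $B(\xi,2^k(1-r))\setminus B(\xi,2^{k-1}(1-r))$: on the $k$-th annulus the kernel contributes a factor $2^{-k(d+1)}$ while the mass is at most $(2^k(1-r))^{d-\beta}$ times the supremum, and the resulting geometric series converges because $\beta>-1$. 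Consequently, if $\overline D_\mu(\xi)<\infty$ — equivalently if the supremum above is finite, which also uses $d-\beta>0$ to dominate large radii by $\mu(\sd)$ — then $P[\mu](r\xi)=O\big((1-r)^{-\beta}\big)$, so $\xi\notin\mathcal E(\beta,f)$. Hence $\mathcal E(\beta,f)\subset\{\xi\in\sd;\ \overline D_\mu(\xi)=+\infty\}$. To conclude I would invoke the classical Frostman-type comparison (recalled in Section~\ref{SECPREL}): for a finite Borel measure $\mu$ and any $\lambda>0$, the set $\{\xi;\ \limsup_{\rho\to 0}\mu(B(\xi,\rho))/\rho^{\,d-\beta}>\lambda\}$ has $\mathcal H^{\,d-\beta}$-measure at most $C\lambda^{-1}\mu(\sd)$; letting $\lambda\to\infty$ gives $\mathcal H^{\,d-\beta}(\mathcal E(\beta,f))=0$, whence $\dimh(\mathcal E(\beta,f))\le d-\beta$.

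\medskip\noindent\textbf{Converse.} Pick $s$ with $\dimh(E)<s<d-\beta$, so that $\mathcal H^s(E)=0$ (if $\dimh(E)<0$ then $E=\emptyset$ and nothing is needed). Using $\mathcal H^s(E)=0$, for every $n\ge 1$ choose a covering of $E$ by balls $B(\xi^n_j,\rho^n_j)$ with $\rho^n_j\le 1/n$ and $\sum_j(\rho^n_j)^s\le 2^{-n}$, and set
$$f^n:=\sum_j\frac{1}{(\rho^n_j)^{\beta}}\,\mathbf{1}_{B(\xi^n_j,\rho^n_j)\cap\sd},\qquad f:=\sum_{n\ge 1}n\,f^n.$$
Since $\sigma\big(B(\xi,\rho)\cap\sd\big)\asymp\rho^d$ and $\rho^n_j\le 1$ with $d-\beta>s$, one gets $\|f^n\|_1\lesssim\sum_j(\rho^n_j)^{d-\beta}\le\sum_j(\rho^n_j)^s\le 2^{-n}$; this is the only place where the gap $s<d-\beta$ enters, and it is exactly what allows the diverging weights $n$ while keeping $\|f\|_1\lesssim\sum_n n\,2^{-n}<\infty$, so $f\in L^1(\sd)$. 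Now fix $\xi\in E$. For each $n$ there is $j$ with $\xi\in B(\xi^n_j,\rho^n_j)$; set $r=1-\rho^n_j$. For $\eta\in B(\xi^n_j,\rho^n_j)\cap\sd$ one has $\|\xi-\eta\|\le 2\rho^n_j=2(1-r)$, hence $P(r\xi,\eta)\gtrsim(1-r)^{-d}$, and therefore, keeping only the $j$-th bump and the weight $n$,
$$P[f](r\xi)\ \ge\ n\,P[f^n](r\xi)\ \ge\ \frac{n}{(\rho^n_j)^{\beta}}\int_{B(\xi^n_j,\rho^n_j)\cap\sd}P(r\xi,\eta)\,d\sigma(\eta)\ \gtrsim\ n\,(1-r)^{-\beta}.$$
Since $\rho^n_j\le 1/n\to 0$, the corresponding radii tend to $1$, so $\limsup_{r\to 1}|P[f](r\xi)|/(1-r)^{-\beta}=+\infty$, i.e. $\xi\in\mathcal E(\beta,f)$; thus $E\subset\mathcal E(\beta,f)$.

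\medskip\noindent The step I expect to be the real heart of the matter is the pointwise estimate in the upper bound: once one knows that a growth $(1-r)^{-\beta}$ of $P[\mu](r\xi)$ is equivalent — up to constants and to passing to $\limsup$ — to infiniteness of the upper $(d-\beta)$-density of $|f|\,d\sigma$ at $\xi$, both directions reduce to standard facts, the density–Hausdorff comparison on one side and an efficient covering with diverging amplitudes on the other.
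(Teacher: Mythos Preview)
Your argument is correct in both directions, but it follows a route that differs from the paper's, and the differences are worth noting.

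\smallskip
\textbf{Upper bound.} You control $P[\mu](r\xi)$ by a dyadic annuli decomposition of the Poisson kernel, obtaining
\[
P[\mu](r\xi)\ \lesssim\ (1-r)^{-\beta}\,\sup_{\rho>0}\frac{\mu(B(\xi,\rho))}{\rho^{\,d-\beta}},
\]
and then appeal to the standard density--Hausdorff comparison (the Vitali/5$r$ argument giving $\mathcal H^{d-\beta}\{\xi:\overline D_\mu(\xi)>\lambda\}\lesssim\mu(\sd)/\lambda$). This is clean and elementary, but it is tailored to the power case $\tau(s)=s^{-\beta}$: the geometric series $\sum_k 2^{-k(\beta+1)}$ is what makes the estimate close. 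The paper instead proves Lemma~\ref{LEMHL}: for each $r$ there is a \emph{single} radius $\delta\ge 1-r$ with $|P[\mu](ry)|\le C\,|\mu|(\kappa(y,\delta))/\sigma(\kappa(y,\delta))$, obtained by approximating the kernel by a step function via Harnack's inequality. From this, the $5r$-covering argument is run directly on $\mathcal E_M$, yielding the more general Theorem~\ref{THMAUBRYLIKE} for arbitrary dimension functions $\phi$ with $\phi(s)=O(\tau(s)s^d)$. Your approach buys simplicity; the paper's buys generality. (A small imprecision: the density--Hausdorff comparison you invoke is not actually stated in Section~\ref{SECPREL}; only the $5r$-lemma and the mass transference principle are. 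The result you use is of course standard, e.g.\ in Mattila or Falconer.)

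\smallskip
\textbf{Converse.} Both your construction and the paper's (Theorem~\ref{THMAUBRYLIKE2}) build $f$ as a nonnegative superposition of indicator functions of caps coming from efficient covers of $E$. You exploit the strict gap $s<d-\beta$ to insert the diverging weights $n$ explicitly, and you bound the Poisson kernel from below on a cap by a direct pointwise computation using $\|r\xi-\eta\|^2=(1-r)^2+r\|\xi-\eta\|^2$. The paper instead groups the caps by dyadic size, extracts weights $\omega_n\to\infty$ from the summability (the usual trick), and proves the lower bound on $\int_{\kappa(y,1-r)}P(ry,\xi)\,d\sigma(\xi)$ via a harmonic comparison with a cylinder (Lemma~\ref{LEMPOISSONKERNEL}). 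Again, your argument is shorter and perfectly adequate for the power case; the paper's is set up to yield the general statement for arbitrary $\phi$ and $\tau$ with $\tau(s)=O(s^{-d}\phi(s))$.
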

The first part of Theorem \ref{THMMAIN1} has already been obtained by D. Armitage in \cite{Ar81} in the context of Poisson integrals on the upper half-space
(see also \cite{Wat80} for analogous results regarding solutions of the heat equation).
However, we will produce a complete proof of Theorem \ref{THMMAIN1}. Our method of proof differs substantially from that of \cite{Ar81}.
Moreover, it provides a more general result (see Theorem \ref{THMAUBRYLIKE} below). It seems that this last statement cannot be obtained
from Armitage's work without adding assumptions on $\phi$ and $\tau$.

Our second task is to perform a multifractal analysis of the radial behaviour of harmonic functions,
as is done in \cite{BH11}, \cite{BH11b} for the divergence of Fourier series. For a given function
$f\in L^1(\sd)$ and a given $y\in\sd$, we define the real number $\beta(y)$ as the infimum of the real numbers $\beta$ such that $|P[f](ry)|=O\left( (1-r)^{-\beta}\right)$. The level sets of the function $\beta$ are defined by
\begin{eqnarray*}
E(\beta,f)&=&\left\{y\in\sd;\ \beta(y)=\beta\right\}\\
&=&\left\{y\in\sd;\ \limsup_{r\to1}\frac{\log |P[f](ry)|}{-\log(1-r)}=\beta\right\}.
\end{eqnarray*}
We  can ask for which values of $\beta$ the sets
$E(\beta,f)$ are non-empty. This set of values will be called the domain of definition of the spectrum of singularities of $f$.
If $\beta$ belongs to the domain of definition of the spectrum of singularities, it is also interesting to estimate the
Hausdorff dimension of the sets $E(\beta,f)$. The function $\beta\mapsto
\dim_\mathcal{H}(E(\beta,f))$ will be called the spectrum of singularities of the
function $f$.

Theorem \ref{THMMAIN1} ensures that $\dim_\mathcal{H}(E(\beta,f))\leq d-\beta$ and our second main result
is that a \emph{typical} function $f\in L^1(\sd)$ satisfies $\dim_\mathcal{H}(E(\beta,f))=d-\beta $ for \emph{any} $\beta\in[0,d]$.
In particular, such a function $f$ has a multifractal behavior, in the sense that the domain of definition of its spectrum of singularities contains an interval with non-empty interior.
\begin{theorem}\label{THMMAIN2}
For quasi-all functions $f\in L^1(\sd)$, for any $\beta\in[0,d]$, $$\dim_\mathcal{H}\big(E(\beta,f)\big)=d-\beta.$$
\end{theorem}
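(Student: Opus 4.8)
By Theorem~\ref{THMMAIN1}, $\dimh(E(\beta,f))\le d-\beta$ for every $f\in L^1(\sd)$ and every $\beta\in[0,d]$, so it suffices to show that
$$\mathcal R=\bigl\{f\in L^1(\sd):\ \dimh(E(\beta,f))\ge d-\beta\ \text{ for every }\beta\in[0,d]\bigr\}$$
is residual in $L^1(\sd)$. Two values cost nothing: one has $\dimh(E(0,f))=d$ for quasi-all $f$ (indeed $\beta(y)=0$ wherever $P[f](r\cdot)$ converges to a nonzero limit, which happens $\sigma$-a.e.\ as soon as $f\neq 0$ a.e.), and $\beta=d$ only asks that $E(d,f)\neq\emptyset$, which also holds on a residual set. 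The whole content is the range $\beta\in(0,d)$: for a typical $f$ we must exhibit, for each such $\beta$ and each $\varepsilon>0$, a Cantor-type set of directions of dimension $>d-\beta-\varepsilon$ along which $|P[f](r\cdot)|$ blows up \emph{exactly} like $(1-r)^{-\beta}$ --- the lower rate giving $\beta(y)\ge\beta$ and, just as importantly, the matching upper rate giving $\beta(y)\le\beta$, so that such a direction genuinely lies in the level set $E(\beta,f)$ rather than in a lower one.

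The core is a perturbative, ``multifractal'' reinforcement of the construction behind the converse part of Theorem~\ref{THMMAIN1}, which I would phrase as follows: \emph{given $g\in L^1(\sd)$, $\delta>0$, a compact interval $[a,b]\subseteq(0,d)$ and $\varepsilon>0$, there exist $f\in L^1(\sd)$ with $\|f-g\|_1<\delta$, a compact set $K\subseteq\sd$ and a function $\psi\colon K\to[a,b]$ such that $\beta(y)=\psi(y)$ for every $y\in K$ and $\dimh\bigl(\psi^{-1}(\{\beta\})\bigr)\ge d-\beta-\varepsilon$ for every $\beta\in[a,b]$.} One writes $f=g+c\,f_0$ with $c>0$ small and $f_0$ a lacunary sum of $L^1$-normalised bumps carried by spherical caps arranged along a nested Cantor scaffolding $K$, the choice of caps and the sizes of the coefficients being organised --- in the spirit of ubiquity-type constructions --- so that $\int_{B(y,t)}|f_0|\,d\sigma$ is comparable to $t^{\,d-\psi(y)}$ at a suitable sequence of scales $t_k\to 0$ but never exceeds it by more than a subpolynomial factor, the level sets of the exponent function $\psi$ then carrying the dimensions $d-\beta$ (up to $\varepsilon$). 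Two features secure the \emph{exact} exponent. First, the radial maximal function of $P[g]$ is finite $\sigma$-a.e., hence bounded by a fixed constant on a set of measure $>1-\eta$; placing the whole scaffolding inside such a set (possible since it has positive measure) renders $P[g]$ harmless on $K$, so that the exponent of $f$ on $K$ equals that of $f_0$. Second --- where the real work lies --- via the standard comparison of Poisson integrals with ball averages, the previous two-sided control of $\int_{B(y,t)}|f_0|$ yields $|P[f_0]((1-t)y)|$ of order $t^{-\psi(y)}$ (up to subpolynomial factors) along $t_k\to 0$, whence $\beta(y)\ge\psi(y)$, and $\le t^{-\psi(y)}$ up to a subpolynomial factor at \emph{every} radius, whence $\beta(y)\le\psi(y)$; a Frostman/mass-distribution estimate on the scaffolding provides the dimensions of the level sets of $\psi$.

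Granted this lemma, the passage to genericity is a standard Baire argument. For each interval $[a_p,b_p]$ with rational endpoints one fixes a countable supply of candidate scaffoldings $\mathcal T^{(i)}$, rich enough that every $g$ can be perturbed by an arbitrarily small amount so as to ``activate'' the first generations of a suitable one located deep inside the large-measure region on which $P[g]$ stays bounded; for $i,m\in\NN$ one sets $\mathcal U_{i,m}$ to be the set of $f\in L^1(\sd)$ satisfying, on every cap $Q$ of the first $m$ generations of $\mathcal T^{(i)}$ (labelled by its exponent $\beta_Q$), the finitely many strict inequalities $t^{-\beta_Q+1/m}<\bigl|P[f]\bigl((1-t)y_Q\bigr)\bigr|<t^{-\beta_Q-1/m}$. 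Each $\mathcal U_{i,m}$ is open --- finitely many strict inequalities in quantities that depend $\|\cdot\|_1$-continuously on $f$, since $f\mapsto P[f](x)$ is continuous for each fixed $x\in\bd$ --- and it is dense by the lemma, the perturbation being chosen small enough not to disturb the finitely many inequalities already imposed at earlier stages of the bookkeeping. On the residual set $\mathcal R'=\bigcap_{i,m}\mathcal U_{i,m}$ the inequalities hold on \emph{all} generations of every $\mathcal T^{(i)}$, so, reasoning as in the lemma, for every $p$ and every $\beta\in[a_p,b_p]$ there is a Cantor set inside $E(\beta,f)$ of dimension $\ge d-\beta-\varepsilon$, for all $\varepsilon>0$; since $\bigcup_p[a_p,b_p]=(0,d)$ and the two values $\beta=0,d$ are already settled, $\mathcal R$ contains the residual set $\mathcal R'$, which proves the theorem.

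The step I expect to be the main obstacle is the construction of $f_0$. The lower bound $\beta(y)\ge\psi(y)$ along well-chosen scales is comparatively soft; the matching \emph{upper} bound --- $|P[f_0]((1-t)y)|$ essentially no larger than $t^{-\psi(y)}$ at \emph{every} radius, which is precisely what turns a ``$\beta(y)\ge\beta$'' set into a genuine level set of $\beta(\cdot)$ --- requires controlling the Poisson integrals of all the bumps at all scales, including the contributions of caps far from $y$ and of caps much finer than the resolution, and interpolating between the construction scales; it is here that the sub-maximality of the scaffolding's dimension and the lacunarity of its scales are essential. The subsidiary points --- fitting an entire interval of exponents, with the sharp dimension on each level set, into a single $f_0$ (so that the conclusion reaches every real $\beta$, not merely a countable dense set), and neutralising the arbitrary datum $g$ --- are handled, respectively, by the ubiquity-type layering of the scaffolding and by placing it where the radial maximal function of $P[g]$ is finite.
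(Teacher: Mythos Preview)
Your strategy is plausible in outline, but you are making the problem much harder than it needs to be, and the step you yourself flag as the ``main obstacle'' is precisely the one the paper's argument avoids.

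The key difference is this. You try to produce, for each $\beta$, a set of directions $y$ along which you certify $\beta(y)=\beta$ \emph{directly}, by proving both $\beta(y)\ge\beta$ and the matching $\beta(y)\le\beta$ via two-sided control of $|P[f_0]((1-t)y)|$ at \emph{all} scales. That upper control is indeed delicate, and you do not carry it out. The paper never proves it. Instead it builds (via a single family of ``saturating'' functions $f_n$ supported on caps centred at nested nets of $\sd$, and the standard Baire recipe $h_n=g_n+\tfrac1n f_n$ with $\|g_n\|_\infty\le n$) a residual set on which, for every $\alpha>1$, there is a set $D_\alpha(f)=\limsup_k D_{n_k,\alpha}$ with $\mathcal H^{d/\alpha}(D_\alpha(f))=+\infty$ (this is where the mass transference principle enters) and on which only the \emph{lower} bound $\beta(y)\ge(1-\tfrac1\alpha)d$ is known. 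Now the general upper bound from Theorem~\ref{THMMAIN1}---which you invoke only for the inequality $\dimh(E(\beta,f))\le d-\beta$---is used a second time: the subset $D_\alpha^{(2)}(f)\subset D_\alpha(f)$ where $\beta(y)>(1-\tfrac1\alpha)d$ lies in $\bigcup_n\mathcal E(\beta_n,f)$ for a sequence $\beta_n\searrow(1-\tfrac1\alpha)d$, hence has $\mathcal H^{d/\alpha}$-measure zero by Corollary~\ref{CORAUBRYLIKE}. The remainder $D_\alpha^{(1)}(f)$ still has infinite $\mathcal H^{d/\alpha}$-measure and sits inside the exact level set $E\bigl((1-\tfrac1\alpha)d,f\bigr)$. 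Thus the pointwise upper bound on $\beta(y)$ comes for free from a dimension estimate, not from any control of the Poisson integral; your ``main obstacle'' simply disappears. This trick also handles all $\beta\in[0,d]$ uniformly, so your separate treatment of the endpoints (the case $\beta=d$ you leave unjustified) is unnecessary.

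There is also a structural problem with your Baire scheme as written. You take $\mathcal R'=\bigcap_{i,m}\mathcal U_{i,m}$, which requires \emph{each} $\mathcal U_{i,m}$, for a \emph{fixed} scaffolding $\mathcal T^{(i)}$, to be dense. But given an arbitrary $g\in L^1(\sd)$ there is nothing to stop $|P[g]((1-t)y_Q)|$ from already exceeding your upper threshold $t^{-\beta_Q-1/m}$ at the finitely many prescribed points---take $g$ a bump near $y_Q$, giving order $t^{-d}$---and a small $L^1$-perturbation cannot repair this. Replacing the intersection over $i$ by a union restores density but then the index $i$ may vary with $m$, and you lose control of any single limit set. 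Dropping the upper inequality from the definition of $\mathcal U_{i,m}$ fixes both issues, but then you are back to needing the paper's trick to recover the exact level sets.
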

The terminology "quasi-all" used here is relative to the Baire category theorem. It means that this property is true for a residual
set of functions in $L^1(\sd)$. 

\medskip

\noindent \textsc{Notations.} Throughout the paper, $\north=(0,\dots,0,1)$ will denote the north pole of $\sd$.
The letter $C$ will denote a positive constant whose value may change from line to line. This value may depend on the dimension $d$, but it will never depend on the other parameters which are involved.

\medskip

\noindent \textsc{Acknowledgements.} We thank the referee for his/her careful reading and for having provided to us references \cite{Ar81} and \cite{Wat80}.

\section{Preliminaries}\label{SECPREL}
In this section, we survey some results regarding Hausdorff measures.  We refer to \cite{Falc} and to \cite{Mat95} for more on this subject. Let $(X,d)$ be a metric space such that,
for every $\rho>0$, the space $X$ can be covered by a countable number of balls 
with diameter less than $\rho$. If $B=B(x,r)$  is a ball in $X$ and $\lambda>0$,
$|B|$ denotes the diameter of $B$ and $\lambda B$ denotes the ball $B$
scaled by a factor $\lambda$, i.e. $\lambda B=B(x,\lambda r)$.

A dimension function $\phi:\mathbb R_+\to\mathbb R_+$ is a continuous nondecreasing function satisfying $\phi(0)=0$. Given $E\subset X$, the $\phi$-Hausdorff outer measure of  $E$ is defined by 
$$\mathcal H^{\phi}(E)=\lim_{\veps\to 0}\inf_{r\in R_\veps(E)}\sum_{B\in r}\phi(|B|),$$
where $R_\veps(E)$ is the set of countable coverings of $E$ with balls $B$ with diameter $|B|\leq\veps$. 
When $\phi_s(x)=x^s$, we write for short $\mathcal H^s$ instead of $\mathcal H^{\phi_s}$. The Hausdorff dimension of a set $E$
is 
$$\dim_{\mathcal H}(E):=\sup\{s>0;\ \mathcal H^s (E)>0\}=\inf\{s>0;\ \mathcal H^s(E)=0\}.$$

We will need to construct on $\sd$ a family of subsets with prescribed Hausdorff dimension. 
For this we shall use results of \cite{BV06}. Recall that a function
$\phi:\mathbb R_+\to\mathbb R_+$ is doubling provided there exists $\lambda>1$ such that,
for any $x>0$, $\phi(2x)\leq\lambda \phi(x)$. From now on, we suppose that the metric space $(X,d)$ supports 
a doubling dimension function $\phi$ such that 
$$\frac1C \phi(|B|)\leq \mathcal H^\phi(B)\leq C\phi(|B|)$$
where $C$ is a positive constant independent of $B$. 

The previous assumption is satisfied when $X=\sd$, endowed with the distance inherited
from $\mathbb R^{d+1}$, and $\phi(x)=x^{d}$. 

Given a dimension function $\psi$ and a ball $B=B(x,r)$, we denote by $B^\psi$ the ball
$B^\psi=B(x,\psi^{-1}\circ\phi(r))$. The following mass transference principle of \cite{BV06} will be used.

\begin{lemma}[The mass transference principle]\label{LEMMTP}
Let $(B_i)$ be a sequence of balls in $X$ whose radii go to zero. Let $\psi$ be a dimension
function such that $\psi(x)/\phi(x)$ is monotonic and suppose that, for any ball $B$ in $X$,
$$\mathcal H^\phi\left(B\cap\limsup_{i\to+\infty}B_i\right)=\mathcal H^\phi(B).$$
Then, for any ball $B$ in $X$,
$$\mathcal H^\psi\left(B\cap\limsup_{i\to+\infty}B_i^\psi\right)=\mathcal H^\psi(B).$$
\end{lemma}

Finally, the following basic covering lemma due to Vitali will be required (see \cite{Mat95}).

\begin{lemma}[The $5r$-covering lemma]
Every family $\mathcal F$ of balls with uniformly bounded diameters in a
separable metric space $(X,d)$
contains a disjoint subfamily $\mathcal G$ such that
$$\bigcup_{B\in\mathcal F}B\subset \bigcup_{B\in\mathcal G}5B.$$
\end{lemma}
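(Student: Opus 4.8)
\medskip\noindent\textbf{Proof strategy.} The plan is to run the classical greedy selection, organised according to the size of the balls. Let $R<\infty$ be an upper bound for the radii of the balls in $\mathcal F$ (it exists since the diameters are uniformly bounded; balls of radius $0$ are trivial and may be discarded), and split $\mathcal F$ into the dyadic size layers
$$\mathcal F_n=\left\{B(x,r)\in\mathcal F\ ;\ 2^{-n}R<r\leq 2^{-n+1}R\right\},\qquad n\geq 1,$$
so that $\mathcal F=\bigcup_{n\geq 1}\mathcal F_n$. I would then define subfamilies $\mathcal G_n$ by induction on $n$: having chosen $\mathcal G_1,\dots,\mathcal G_{n-1}$, let $\mathcal G_n$ be a \emph{maximal} pairwise disjoint subfamily of
$$\left\{B\in\mathcal F_n\ ;\ B\cap B'=\emptyset\ \text{ for every }B'\in\mathcal G_1\cup\cdots\cup\mathcal G_{n-1}\right\}.$$
Such a maximal family exists by Zorn's lemma, since the union of a chain of pairwise disjoint subfamilies is again pairwise disjoint (alternatively, as $X$ is separable every pairwise disjoint family of balls of radius $>2^{-n}R$ is countable, so one may build $\mathcal G_n$ by a countable greedy procedure and bypass Zorn). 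Put $\mathcal G=\bigcup_{n\geq 1}\mathcal G_n$; by construction $\mathcal G$ is pairwise disjoint, and it remains only to verify the covering inclusion.

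For that I would first record the elementary geometric fact: if $B_1=B(x_1,r_1)$ and $B_2=B(x_2,r_2)$ satisfy $B_1\cap B_2\neq\emptyset$ and $r_1\leq 2r_2$, then $B_1\subset 5B_2$. Indeed, choosing $z\in B_1\cap B_2$ gives $d(x_1,x_2)\leq d(x_1,z)+d(z,x_2)<r_1+r_2$, whence $d(y,x_2)\leq d(y,x_1)+d(x_1,x_2)<2r_1+r_2\leq 5r_2$ for every $y\in B_1$. Now fix $B\in\mathcal F$, say $B\in\mathcal F_n$. By maximality of $\mathcal G_n$, either $B\in\mathcal G_n\subset\mathcal G$, in which case $B\subset 5B$ trivially; or $B$ meets some $B'\in\mathcal G_1\cup\cdots\cup\mathcal G_n$. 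In the latter case $B'\in\mathcal F_m$ for some $m\leq n$, so the radius of $B'$ is $>2^{-m}R\geq 2^{-n}R$, while the radius of $B$ is $\leq 2^{-n+1}R=2\cdot 2^{-n}R<2\cdot(\text{radius of }B')$; hence the radius of $B$ is less than twice that of $B'$, and the geometric fact gives $B\subset 5B'$. In all cases $B\subset 5B'$ for some $B'\in\mathcal G$, which is exactly $\bigcup_{B\in\mathcal F}B\subset\bigcup_{B\in\mathcal G}5B$.

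The substance of the argument — and the one point that genuinely requires care — is the bookkeeping behind the size layers: the layer boundaries must be chosen dyadically in the radius and $\mathcal G_n$ must be forced to avoid \emph{all} previously selected balls, not merely the ones in its own layer, precisely so that a ball failing to be selected is blocked by an already-chosen ball of radius at least half its own. That half-radius comparison is exactly what the factor $5$ in the conclusion absorbs. The remaining ingredients — existence of maximal disjoint subfamilies and the triangle-inequality estimate — are routine, and separability of $X$, though available, is not essential.
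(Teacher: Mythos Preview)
The paper does not actually prove this lemma: it is stated as a quotation of the basic Vitali covering lemma, with a reference to Mattila \cite{Mat95}, and no argument is given. Your proof is correct and is exactly the standard one found in that reference --- dyadic layering of $\mathcal F$ by radius, greedy maximal selection descending through the layers, and the triangle-inequality fact that $B_1\cap B_2\neq\emptyset$ together with $r_1\leq 2r_2$ forces $B_1\subset 5B_2$ --- so there is nothing further to compare.
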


\section{Majorisation of the Hausdorff dimension}
Let $f\in L^1(\sd)$. We intend to show that $P[f](r\cdot)$ cannot grow too fast on sets with large Hausdorff dimension. More generally, we shall do this for $\mu\in\mathcal M(\sd)$ and $P[\mu]$ instead of $P[f]$. If $y \in\sd$ and $\delta>0$, we introduce
$$\kappa(y,\delta)=\big\{\xi\in\sd;\ \|\xi-y\|<\delta\big\}$$
the open spherical cap on $\sd$ with center $y$ and radius $\delta>0$. The set $\kappa(y,\delta)$ is just the ball with center $y$ and radius $\delta$ in the metric space $(\sd,\|\cdot\|)$. Let us also define the slice
$$\mathcal S(y,\delta_1,\delta_2)=\big\{\xi\in\sd;\ \delta_1\leq \|\xi-y\|<\delta_2\big\}$$
where $0\leq\delta_1<\delta_2$.

The starting point of our argument is a result linking the radial behaviour of $P[\mu]$ to the Hardy-Littlewood maximal function. More precisely, it is well known that if $y\in\sd$, then
$$\sup_{r\in(0,1)}\big|P[\mu](ry)\big|\leq \sup_{\delta>0}\frac{|\mu|(\kappa(y,\delta))}{\sigma(\kappa(y,\delta))}$$
(see for example \cite{ABR}). Our aim is to control, for a fixed $r$ close to 1, the minimal size of the caps which come into play on the
right-hand side.
\begin{lemma}\label{LEMHL}
Let $\mu\in \mathcal M(\sd)$, $r\in(0,1)$ and $y\in \sd$. There exists $\delta\geq 1-r$ such that
$$\big|P[\mu](ry)\big|\leq C\frac{|\mu|(\kappa(y,\delta))}{\sigma(\kappa(y,\delta))},$$
where $C$ is a constant independent of $\mu$, $r$ and $y$.
\end{lemma}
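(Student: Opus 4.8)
The plan is to start from the pointwise majorisation $\bigl|P[\mu](ry)\bigr|\le\int_{\sd}P(ry,\xi)\,d|\mu|(\xi)$ and to estimate the Poisson kernel on a dyadic decomposition of the sphere centred at $y$ at the scale $t:=1-r$. The only geometric input needed is the elementary identity $\|ry-\xi\|^2=(1-r)^2+r\,\|y-\xi\|^2$, which yields the two lower bounds $\|ry-\xi\|\ge 1-r$ and $\|ry-\xi\|\ge\sqrt r\,\|y-\xi\|$; together with $1-r^2\le 2(1-r)$ this controls $P(ry,\xi)$ from above. I may assume $r\ge 1/2$, for when $r<1/2$ the crude estimate $\bigl|P[\mu](ry)\bigr|\le 2\|\mu\|/(1-r)^d\le 2^{d+1}\|\mu\|$ already gives the conclusion with $\delta=3$, since then $\kappa(y,\delta)=\sd$ and $\sigma(\kappa(y,\delta))=1$.

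Assuming $r\ge 1/2$, I write $\sd=A_0\cup\bigcup_{k\ge 1}A_k$ with $A_0=\kappa(y,2t)$ and $A_k=\mathcal S(y,2^kt,2^{k+1}t)$; only finitely many $A_k$ are non-empty because $\|y-\xi\|\le 2$ on $\sd$. On $A_0$ the bound $\|ry-\xi\|\ge t$ gives $\int_{A_0}P(ry,\xi)\,d|\mu|\le 2t^{-d}\,|\mu|(\kappa(y,2t))$; on $A_k$ with $k\ge 1$ the bound $\|ry-\xi\|\ge\sqrt r\,\|y-\xi\|\ge 2^{k-1/2}t$ gives $\int_{A_k}P(ry,\xi)\,d|\mu|\le C\,2^{-k}\,(2^kt)^{-d}\,|\mu|(\kappa(y,2^{k+1}t))$. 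Because the surface measure behaves like a $d$-dimensional measure — concretely $\sigma(\kappa(y,\delta))\asymp\delta^d$ uniformly for $0<\delta\le 2$ with implied constants depending only on $d$, and $\sigma(\kappa(y,\delta))=1$ for $\delta>2$ — each summand is, up to a dimensional constant, at most $2^{-k}$ times the average $|\mu|(\kappa(y,2^{k+1}t))/\sigma(\kappa(y,2^{k+1}t))$ (the $A_0$ term being the $k=0$ instance).

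Summing the geometric series I obtain
$$\bigl|P[\mu](ry)\bigr|\le C\sum_{k\ge 0}2^{-k}\,\frac{|\mu|(\kappa(y,2^{k+1}t))}{\sigma(\kappa(y,2^{k+1}t))}\le 2C\max_{k\ge 0}\frac{|\mu|(\kappa(y,2^{k+1}t))}{\sigma(\kappa(y,2^{k+1}t))},$$
the maximum being attained since the caps $\kappa(y,2^{k+1}t)$ take only finitely many values. Taking $\delta:=2^{k_0+1}t$ for an index $k_0$ realising this maximum proves the lemma, and indeed $\delta\ge 2t\ge 1-r$.

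The estimates on each annulus are routine, and the geometric series does all the work; the one point that must be handled with some care is the comparison $\sigma(\kappa(y,\delta))\asymp\delta^d$ with constants that stay uniform both as $\delta\to 0$ and up to $\delta\sim\operatorname{diam}(\sd)$, so that the final constant $C$ genuinely depends only on $d$ and not on $\mu$, $r$ or $y$. This is a standard computation with the measure of a spherical cap, but it is the only place where a little vigilance is required.
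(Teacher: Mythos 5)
Your proof is correct, but it takes a genuinely different route from the paper's. You proceed by the classical dyadic annulus decomposition: splitting $\sd$ into $\kappa(y,2t)$ and the shells $\mathcal S(y,2^kt,2^{k+1}t)$, using the exact identity $\|ry-\xi\|^2=(1-r)^2+r\|y-\xi\|^2$ to bound $P(ry,\cdot)$ on each piece, invoking $\sigma(\kappa(y,\delta))\asymp\delta^d$, and summing a geometric series whose ratio $2^{-k}$ comes from the extra power of $\|ry-\xi\|$ in the denominator of the kernel. This is the textbook way of dominating a Poisson integral by a maximal average, and it is entirely rigorous as you have written it (including the reduction to $r\ge 1/2$, which is needed so that $\sqrt r$ is bounded below). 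The paper instead uses Harnack's inequality to produce a constant $C_0\in(0,1)$ and then builds a finite sequence of radii $\delta_0<\delta_1<\dots<\delta_k$ on which the kernel decreases by exactly the factor $C_0$ at each step; the kernel is then sandwiched between two step functions supported on caps, and the crucial normalisation $\int_{\sd}P(r\north,\xi)\,d\sigma(\xi)=1$ is used to control the total weight $\sum_j d_j\,\sigma(\kappa(\north,\delta_j))$. The paper's argument is slicker in that it never computes $\sigma(\kappa(y,\delta))$ explicitly and never needs the algebraic identity for $\|ry-\xi\|$; it extracts the geometric decay from Harnack and the unit mass of the Poisson kernel. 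Your argument is more elementary and more transparent, at the price of the (standard but necessary) two-sided cap-measure estimate $\sigma(\kappa(y,\delta))\asymp\delta^d$ with uniform constants, which you correctly flag as the one point requiring care. Both give the same conclusion, including the same lower bound $\delta\ge 1-r$ (yours via $\delta=2^{k_0+1}t\ge 2(1-r)$; the paper gets $\delta\ge(1-r)/2$ first and then doubles the cap).
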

\begin{proof}
Replacing $\mu$ by $|\mu|$, we may assume that $\mu$ is positive. Moreover, without loss of generality, we may assume that  $y=\north$  is the north pole. Observe that
$$P[\mu](r\north)=\int_{\sd}P(r\north,\xi)d\mu(\xi),$$
with 
\begin{eqnarray*}
P(r\north,\xi)&=&\frac{1-r^2}{\|r\north-\xi\|^{d+1}}\\
&=&\frac{1-r^2}{(1-2r\xi_{d+1}+r^2)^{(d+1)/2}}.
\end{eqnarray*}
Observe also that $\|\xi-\north\|^2=2(1-\xi_{d+1})$ if $\xi\in \sd$. In particular, $P(r\north, \xi)$ just depends on $\|\xi-\north\|$ and $r$. Moreover, $P(r\north,\xi)$ decreases when $\|\xi-\north\|$ increases, $\xi$ keeping on $\sd$. 

We shall approximate $\xi\mapsto P(r\north,\xi)$ by functions which are constant on slices.
The function $\xi\mapsto P(r\north,\xi)$ is harmonic and nonnegative in the ball 
$$\{\xi\in\rd;\ \|\xi-\north\|<1-r\}.$$ By the Harnack inequality, there exists $C_0>0$ (which does not depend on $r$) such that, for any $\xi\in\rd$ with $\|\xi-\north\|\leq (1-r)/2$, 
$$P(r\north,\xi)\geq C_0P(r\north,\north).$$
Necessarily, $C_0$ belongs to $(0,1)$. We then define an integer  $k>0$ and  a finite sequence $\delta_0,\dots,\delta_k$ by
\begin{itemize}
\item $\delta_0=0$;
\item $\delta_1=(1-r)/2$;
\item $\delta_{j+1}$ (if it exists)  is the real number in $[\delta_j,2]$ such that $P(r\north,\xi^{j+1})=C_0P(r\north,\xi^j)$ where $\xi^j$ (resp. $\xi^{j+1}$) is an arbitrary point of
$\sd$ such that $\|\xi^j-\north\|=\delta_j$ (resp. $\|\xi^{j+1}-\north\|=\delta_{j+1}$) (remember that $P(r\north,\xi)$ only depends on $\|\xi-\north\|$);
\item $\delta_{j+1}=2$ and $k=j+1$  otherwise.  
\end{itemize}
Observe that the sequence is well defined  and that, by compactness, the process ends up after a finite number of steps.
We set $c_j=P(r\north,\xi^j)$, $0\leq j \leq k-1$ where $\xi^j$ is an arbitrary point in $\sd$ such that $\|\north-\xi^j\|=\delta_j$. Let us also remark  that, if $\xi\in\sd$,  $\xi\neq-\north$,
$$C_0\sum_{j=0}^{k-1}c_j \mathbf 1_{\mathcal S(\north,\delta_j,\delta_{j+1})}(\xi)\leq P(r\north,\xi)\leq \sum_{j=0}^{k-1}c_j \mathbf 1_{\mathcal S(\north, \delta_j,\delta_{j+1})}(\xi).$$
The sequence $(c_j)_{j\ge 0}$ is decreasing. Thus, we can rewrite the step function using only caps as
$$\sum_{j=0}^{k-1}c_j  \mathbf 1_{\mathcal S(\north,\delta_j,\delta_{j+1})}=
\sum_{j=1}^{k}d_j \mathbf 1_{\kappa(\north,\delta_j)}$$
where the real numbers $d_j$ are \emph{positive}. In fact, $d_1=c_0$ and $d_j=c_{j-1}-c_j$ if $j\ge 2$. 
Then we get
\begin{eqnarray}\label{EQLEMHL}
C_0\sum_{j=1}^{k} d_j\mathbf 1_{\kappa(\north,\delta_j)}\leq P(r\north,\xi)\leq \sum_{j=1}^{k}d_j \mathbf 1_{\kappa(\north,\delta_j)}.
\end{eqnarray} 
We integrate the right-hand inequality with respect to $\mu$ to obtain
\begin{eqnarray*}
P[\mu](r\north)&\leq&\sum_{j=1}^k d_j \mu(\kappa(\north,\delta_j))\\
&\leq&\sup_{j=1,\dots, k} \frac{\mu(\kappa(\north,\delta_j))}{\sigma(\kappa(\north,\delta_j))}\sum_{j=1}^k d_j \sigma(\kappa(\north,\delta_j))\\
&\leq&C_0^{-1}\sup_{j=1,\dots, k} \frac{\mu(\kappa(\north,\delta_j))}{\sigma(\kappa(\north,\delta_j))}\int_{\sd}P(r\north,\xi)d\sigma(\xi)
\end{eqnarray*}
where the last inequality is obtained by integrating the left part of (\ref{EQLEMHL}) over $\sd$ with respect to the surface measure $\sigma$. This yields the lemma, since $\int_{\sd}P(r\north,\xi)d\sigma(\xi)= 1$, except that
we have found a cap with radius greater than $(1-r)/2$ instead of $1-r$. Fortunately, it is easy
to dispense with the factor $1/2$. Indeed,
$$\frac{\mu(\kappa(\north,\delta))}{\sigma(\kappa(\north,\delta)\big)}\leq C\frac{\mu(\kappa(\north,\delta))}{\sigma(\kappa(\north,2\delta))}\leq C\frac{\mu(\kappa(\north,2\delta))}{\sigma(\kappa(\north,2\delta))}.$$
\end{proof}
The previous lemma is the main step to obtain an upper bound of the Hausdorff dimension of the sets where $P[\mu](r\cdot)$ behaves badly.
\begin{theorem}\label{THMAUBRYLIKE}
Let $\mu\in\mathcal M(\sd)$ and let $\tau:(0,1)\to(0,+\infty)$ be nonincreasing, with $\lim_{x\to 0^+}\tau(x)=+\infty$.
Let us define
$$\mathcal E(\tau,\mu)=\left\{y\in\sd;\ \limsup_{r\to 1}\frac{|P[\mu](ry)|}{\tau(1-r)}=+\infty\right\}.$$
Let $\phi:(0,+\infty)\to(0,+\infty)$ be a dimension function satisfying $\phi(s)=O(\tau(s)s^d)$. 
Then $$\mathcal H^\phi\big(\mathcal E(\tau,\mu)\big)=0.$$
\end{theorem}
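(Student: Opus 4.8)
The plan is to derive the estimate from Lemma~\ref{LEMHL} and the $5r$-covering lemma by a classical mass-distribution argument. First I would pass to super-level sets: for $M>0$ put
$$\mathcal E_M=\left\{y\in\sd;\ \limsup_{r\to1}\frac{|P[\mu](ry)|}{\tau(1-r)}>M\right\},$$
so that $\mathcal E(\tau,\mu)\subset\mathcal E_M$ for every $M>0$. It is then enough to prove $\mathcal H^\phi(\mathcal E_M)\leq C\,|\mu|(\sd)/M$ with $C$ independent of $M$, and to let $M\to+\infty$. Replacing $\mu$ by $|\mu|$, I may assume $\mu\geq0$ (the case $\mu=0$ being trivial, since then $P[\mu]\equiv0$ and $\mathcal E(\tau,\mu)=\emptyset$).

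The heart of the matter is to convert membership in $\mathcal E_M$ into a $\mu$-mass lower bound on a \emph{small} cap. Fix $\eta\in(0,1)$ small enough that $\phi(s)\leq C\,\tau(s)\,s^d$ for all $0<s\leq10\eta$; this is possible by the hypothesis $\phi(s)=O(\tau(s)s^d)$. Let $y\in\mathcal E_M$. I choose $r$ with $1-r$ as small as needed and $|P[\mu](ry)|>M\,\tau(1-r)$, and apply Lemma~\ref{LEMHL} to obtain $\delta=\delta(y)\geq1-r$ with
$$M\,\tau(1-r)<|P[\mu](ry)|\leq C\,\frac{\mu(\kappa(y,\delta))}{\sigma(\kappa(y,\delta))}.$$
Since $\mu(\kappa(y,\delta))\leq\mu(\sd)$ while $\tau(1-r)\to+\infty$ as $r\to1$, and since $\sigma(\kappa(y,\delta))\geq\sigma(\kappa(y,\eta))>0$ whenever $\delta\geq\eta$, this inequality cannot hold with $\delta\geq\eta$ once $1-r$ is small enough; so I may take $\delta<\eta$. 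For such $\delta$ one has $\sigma(\kappa(y,\delta))\asymp\delta^d$, and since $\tau$ is nonincreasing and $\delta\geq1-r$ one has $\tau(1-r)\geq\tau(\delta)$; combining these,
$$\mu(\kappa(y,\delta))>\frac{M}{C}\,\tau(\delta)\,\delta^d.$$
Writing $B_y=\kappa(y,\delta)$, the ball $5B_y$ has diameter at most $10\delta\leq10\eta$, hence, using the choice of $\eta$ and again the monotonicity of $\tau$,
$$\phi(|5B_y|)\leq\phi(10\delta)\leq C\,\tau(10\delta)\,(10\delta)^d\leq C\,\tau(\delta)\,\delta^d\leq\frac{C}{M}\,\mu(B_y).$$
Note that only the monotonicity of $\tau$ enters here, so that no doubling assumption on $\phi$ is needed.

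It remains to run the covering argument. The caps $\mathcal F=\{B_y;\ y\in\mathcal E_M\}$ all have diameter $<2\eta$, so by the $5r$-covering lemma there is a pairwise disjoint subfamily $\mathcal G\subset\mathcal F$ with $\mathcal E_M\subset\bigcup_{B\in\mathcal F}B\subset\bigcup_{B\in\mathcal G}5B$. The balls $\{5B;\ B\in\mathcal G\}$ then cover $\mathcal E_M$ by sets of diameter $<10\eta$, and
$$\sum_{B\in\mathcal G}\phi(|5B|)\leq\frac{C}{M}\sum_{B\in\mathcal G}\mu(B)\leq\frac{C}{M}\,\mu(\sd),$$
the last inequality because the $B\in\mathcal G$ are pairwise disjoint subsets of $\sd$. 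Letting $\eta\to0$ in the definition of $\mathcal H^\phi$ gives $\mathcal H^\phi(\mathcal E_M)\leq C\,\mu(\sd)/M$, and then $M\to+\infty$ yields $\mathcal H^\phi\big(\mathcal E(\tau,\mu)\big)=0$. The step I expect to be delicate is the middle one: Lemma~\ref{LEMHL} must be used in exactly the right way, since it furnishes a cap of radius \emph{at least} $1-r$ --- this is precisely what makes the monotonicity of $\tau$ work in our favour rather than against us --- and one has to invoke the crude bound $\mu(\kappa(y,\delta))\leq\mu(\sd)$ to be sure the caps that appear are genuinely small, so that the asymptotic hypothesis on $\phi$ can be applied to them. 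The covering part is then entirely routine.
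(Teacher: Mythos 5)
Your proposal is correct and follows essentially the same route as the paper: pass to the super-level sets $\mathcal E_M$, invoke Lemma~\ref{LEMHL} to produce a cap of radius $\delta\geq 1-r$ carrying enough $\mu$-mass, observe that the crude bound $\mu(\sd)<\infty$ forces the caps to be small, apply the $5r$-covering lemma to extract a disjoint subfamily, and sum. The only cosmetic difference is that you fix the threshold $\eta$ (so that the hypothesis $\phi(s)=O(\tau(s)s^d)$ applies) before running the covering argument, whereas the paper parametrises directly by the covering diameter $\veps$; the estimates and constants are the same.
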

\begin{proof}
For any $M>1$, we introduce
$$\mathcal E_M=\left\{y\in\sd;\  \limsup_{r\to 1}\frac{|P[\mu](ry)|}{\tau(1-r)}> M\right\}.$$
Let $\veps>0$ and $y\in \mathcal E_M$. The definition of $\mathcal E_M$ and Lemma \ref{LEMHL} ensure that  we can find  $r_y\in(0,1)$, as close to 1 as we want, and a cap $\kappa_y=\kappa(y,\delta_y)$  such that $\delta_y\geq 1-r_y$ satisfying 
\begin{eqnarray}\label{EQTAU}
M\tau(1-r_y)\leq |P[\mu](r_yy)|\leq C\frac{|\mu|(\kappa_y)}{\sigma(\kappa_y)}.
\end{eqnarray}
Observe that
$$\sigma(\kappa_y)\le\frac{C|\mu|(\sd)}{M\tau(1-r_y)}.$$
It follows that $\delta_y\to 0$ when $r_y\to 1$. We can then always ensure that $|\kappa_y|\le\veps$.
  The family $(\kappa_{y})_{y\in \mathcal E_M}$ is an $\veps$-covering of $\mathcal E_M$.
By the 5r-covering lemma, one can extract from it a countable family of disjoint caps $(\kappa_{y_i})_{i\in\NN}$ such that $\mathcal E_M\subset\bigcup_i 5\kappa_{y_i}$. Inequality (\ref{EQTAU}) implies that
$$M\sum_i \tau(1-r_{y_i})\sigma(\kappa_{y_i})\leq C\|\mu\|.$$
If we remark that $|5\kappa_{y_i}|\ge\delta_{y_i}\ge1-r_{y_i}$, we can conclude that
$$\sum_i \tau(|5\kappa_i|)|5\kappa_i|^d\leq \frac{C}M \|\mu\|.$$
Our assumption on $\phi$ ensures that $\mathcal H^{\phi}(\mathcal E_M)\leq C(\phi,\mu)/M$. The result follows from the equality $\mathcal E(\tau,\mu)=\bigcap_{M>1}\mathcal E_M$.
\end{proof}
Applying this to the function $\tau(s)=s^{-\beta}$, we get the first half of Theorem \ref{THMMAIN1}.
\begin{corollary}\label{CORAUBRYLIKE}
For any $\beta\in[0,d]$, for any $\mu\in \mathcal M(\sd)$, $\dimh\big(\mathcal E(\beta,\mu)\big)\leq d-\beta$.
\end{corollary}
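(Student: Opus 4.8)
The plan is to deduce this directly from Theorem \ref{THMAUBRYLIKE} by making the right choices of growth function $\tau$ and dimension function $\phi$. First I would dispose of the trivial endpoint $\beta=0$: then $d-\beta=d$, and since $\sd$ itself has Hausdorff dimension $d$, there is nothing to prove. So assume $\beta\in(0,d]$ and set $\tau(s)=s^{-\beta}$ for $s\in(0,1)$. This $\tau$ is nonincreasing and satisfies $\lim_{s\to0^+}\tau(s)=+\infty$, and by the very definitions we have $\mathcal E(\tau,\mu)=\mathcal E(\beta,\mu)$, so Theorem \ref{THMAUBRYLIKE} is applicable to this $\tau$.

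Next I would fix an arbitrary real number $s$ with $s>d-\beta$ and take the dimension function $\phi(t)=t^s$. Since $s>d-\beta\ge 0$, the function $\phi$ is continuous, nondecreasing and vanishes at $0$, hence is a bona fide dimension function. Moreover $\tau(t)\,t^d=t^{d-\beta}$, so $\phi(t)/\bigl(\tau(t)t^d\bigr)=t^{\,s-(d-\beta)}\to 0$ as $t\to 0^+$, which is precisely the hypothesis $\phi(s)=O(\tau(s)s^d)$ required by Theorem \ref{THMAUBRYLIKE}. That theorem then yields $\mathcal H^s\bigl(\mathcal E(\beta,\mu)\bigr)=\mathcal H^\phi\bigl(\mathcal E(\beta,\mu)\bigr)=0$.

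Finally, since $\mathcal H^s\bigl(\mathcal E(\beta,\mu)\bigr)=0$ for every $s>d-\beta$, the definition of Hausdorff dimension forces $\dimh\bigl(\mathcal E(\beta,\mu)\bigr)\le d-\beta$, which is the assertion (this also covers the case $\beta=d$, where it says the exceptional set has dimension $0$). I do not expect any genuine obstacle here: the whole content of the corollary is already contained in Theorem \ref{THMAUBRYLIKE}, and the only points to check are the elementary facts that $\tau(s)=s^{-\beta}$ is admissible (monotonicity and blow-up at $0$), that the exponent $s$ can be taken positive, and the harmless separate treatment of $\beta=0$.
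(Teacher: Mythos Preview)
Your proof is correct and follows exactly the route the paper takes: the corollary is obtained by specializing Theorem~\ref{THMAUBRYLIKE} to $\tau(s)=s^{-\beta}$. You are simply more explicit than the paper, which only writes ``applying this to the function $\tau(s)=s^{-\beta}$'': you spell out the choice $\phi(t)=t^s$ with $s>d-\beta$ and you treat the endpoint $\beta=0$ separately (which is indeed necessary, since $\tau\equiv 1$ does not blow up at $0$ and so Theorem~\ref{THMAUBRYLIKE} is not directly applicable there).
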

\begin{remark}The corresponding result for the divergence of Fourier series was obtained in
 \cite{Aub06} using the Carleson-Hunt theorem (see also \cite{BH11b} for the $L^1$-case). Our proof
 in this context is much more elementary, since we do not need the maximal inequality for the 
 Hardy-Littlewood maximal function.
\end{remark}

\section{Minorisation of the Hausdorff dimension}
In this section, we prove the converse part of Theorem \ref{THMMAIN1}. We first need a technical lemma on the Poisson kernel. 
\begin{lemma}\label{LEMPOISSONKERNEL}
There exists a constant $C>0$ such that, for any $r\in(1/2,1)$ and any $y\in\sd$,
$$\int_{\kappa(y,1-r)}P(ry,\xi)d\sigma(\xi)\geq C.$$
\end{lemma}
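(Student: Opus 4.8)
The plan is to estimate from below the integral of the Poisson kernel over the small cap $\kappa(y,1-r)$ by a direct computation, reducing as usual to the case $y=\north$ by rotational symmetry.

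First I would parametrize. Writing $\xi=(\xi',\xi_{d+1})\in\sd$ and recalling $\|\xi-\north\|^2=2(1-\xi_{d+1})$, the cap $\kappa(\north,1-r)$ is $\{\xi\in\sd:\ 2(1-\xi_{d+1})<(1-r)^2\}$, i.e. $\xi_{d+1}>1-\tfrac{(1-r)^2}{2}$. On this cap, using $\|r\north-\xi\|^2=1-2r\xi_{d+1}+r^2$, one has for $r\in(1/2,1)$
$$\|r\north-\xi\|^2 \le (1-r)^2 + 2r(1-\xi_{d+1}) \le (1-r)^2 + (1-r)^2 = 2(1-r)^2,$$
so $\|r\north-\xi\|\le\sqrt2\,(1-r)$ on the cap, hence $P(r\north,\xi)=\dfrac{1-r^2}{\|r\north-\xi\|^{d+1}}\ge \dfrac{1-r}{(\sqrt2\,(1-r))^{d+1}} = \dfrac{c_d}{(1-r)^{d}}$ for a dimensional constant $c_d>0$. (Here I use $1-r^2\ge 1-r$.)

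Next I would bound the surface measure of the cap from below: $\sigma(\kappa(\north,1-r))\ge c_d'(1-r)^d$ for $r$ close to $1$, which is the standard estimate for geodesic/Euclidean caps on $\sd$ — indeed the cap contains a coordinate graph over a Euclidean ball of radius comparable to $1-r$ in $\RR^d$, whose measure is comparable to $(1-r)^d$. Combining the two estimates gives
$$\int_{\kappa(\north,1-r)}P(r\north,\xi)\,d\sigma(\xi)\ \ge\ \frac{c_d}{(1-r)^d}\cdot c_d'(1-r)^d\ =\ c_d c_d'\ =:\ C>0,$$
uniformly in $r\in(1/2,1)$, which is exactly the claim.

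The only mild obstacle is making the lower bound $\sigma(\kappa(\north,\delta))\gtrsim \delta^d$ precise with a constant that is uniform down to $\delta=1-r$ with $r>1/2$ (so $\delta$ can be as large as $1/2$), but this is classical and harmless; alternatively one can first shrink the cap, noting $\kappa(\north,1-r)\supset\kappa(\north,\rho)$ for $\rho=\min(1-r,1/2)$ say, carry out the argument with $\rho$, and observe the final constant only improves. No sharp control is needed anywhere since we only want a positive lower bound independent of $r$ and $y$.
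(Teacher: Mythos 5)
Your proof is correct and takes a genuinely different, more elementary route than the paper's.

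The paper proves the lemma by a comparison/scaling argument: it introduces the cylinder $\mathcal C=\north+\rho\,\mathcal U$ (with $\rho=1-r$ and $\mathcal U$ a fixed domain), considers the Perron--Wiener--Brelot solution $h$ on $\mathcal C$ with boundary data $1$ on the top and $0$ elsewhere, and the harmonic function $u=P[\mathbf 1_{\kappa(\north,\rho)}]$ on $\bd$. It checks $h\le u$ on $\partial(\mathcal C\cap\bd)$, applies the maximum principle to get $u(r\north)\ge h(r\north)$, and then observes that because $\mathcal C$ is a translate and dilation of the \emph{fixed} domain $\mathcal U$, the value $h(r\north)$ is a strictly positive constant independent of $r$. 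Your argument avoids all of this: you bound the Poisson kernel from below pointwise on the cap (using $\|r\north-\xi\|^2=(1-r)^2+2r(1-\xi_{d+1})\le 2(1-r)^2$ there, hence $P(r\north,\xi)\gtrsim (1-r)^{-d}$) and combine it with the standard volume lower bound $\sigma(\kappa(\north,1-r))\gtrsim (1-r)^d$. The two estimates multiply to give a constant, with no harmonic comparison function needed. Your computation is simpler and self-contained; the paper's approach is a nice illustration of how a scaling-invariant barrier argument can give such estimates without explicit kernel computations, and it would adapt more readily to settings where no explicit kernel formula is available. Both are complete and correct; the only point to make precise in yours (as you note) is the uniformity of the cap-measure lower bound for $\delta=1-r\in(0,1/2)$, which is indeed classical and unproblematic.
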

\begin{proof} We may assume $y=\north$. Let $\rho=1-r$. A generic point $x=(x_1,\cdots,x_{d+1})\in \RR^{d+1}$ will be denoted by $x=(x',x_{d+1})$ with $x'\in\mathbb R^d$. In particular, $x\in\kappa(\north,\rho)$ if and only if $\|x'\|^2+x_{d+1}^2=1$ and  
$\|x'\|^2+(1-x_{d+1})^2<\rho^2$. 
Let $\mathcal C$ be the cylinder
$$\mathcal C=\left\{ x\in\mathbb R^{d+1}\ ;\ \|x'\|^2<\rho^2/2\ \mbox{and}\ 1-2\rho< x_{d+1}< 1\right\}.$$
It is not hard to show that $\mathcal S_d\cap \overline{\mathcal C}\subset\kappa(\mathbf N,\rho)$ when $1/2<r<1$. We
now define two harmonic functions: $h$ is the harmonic function in $\mathcal C$ such that 
$h(x)=1$ if $x\in\partial \mathcal C\cap\{ x_{d+1}=1\}$ and $h(x)=0$ if $x\in\partial \mathcal C\cap\{ x_{d+1}<1\}$; 
$u$ is the harmonic function in $B_{d+1}$ such that $u=1$ on $\kappa(\mathbf N,\rho)$ and
$u=0$ elsewhere on $\mathcal S_d$ ($h$ and $u$ are the Perron-Wiener-Brelot solutions of the Dirichlet problem with the given boundary data).
We claim that $h\leq u$ on $\partial(\mathcal C\cap B_{d+1})$. Indeed, we can decompose 
$\partial (\mathcal C\cap B_{d+1})$ into $E\cup F$, with $E\subset S_d\cap\overline{\mathcal C}$
and $F\subset \partial \mathcal C\cap\{x_{d+1}<1\}$. Now, $u=1\geq h$ on $E$ and $u\ge 0=h$ on $F$. By the maximum principle in $\mathcal C\cap B_{d+1}$, we deduce that $u(x)\geq h(x)$ for any $x\in \mathcal C\cap B_{d+1}$. In particular this holds for $x=(1-\rho)\north=r\north$, so that 
$$\int_{\kappa(\north,\rho)}P(r\mathbf N,\xi)d\sigma(\xi)\ge h(r\mathbf N).$$
On the other hand, $\mathcal C$ is just the translation and dilation of a fixed domain : $\mathcal C=\mathbf N+\rho\mathcal U$, where
$$\mathcal U=\left\{ x\in\mathbb R^{d+1}\ ;\ \|x'\|^2<1/2\ \mbox{and}\ -2< x_{d+1}< 0\right\}.$$
Thus the quantity $h(r\north)$ is strictly positive and independent of $r$. We can then take $C=h(r\north)$.
\begin{center}
\includegraphics[width=13cm]{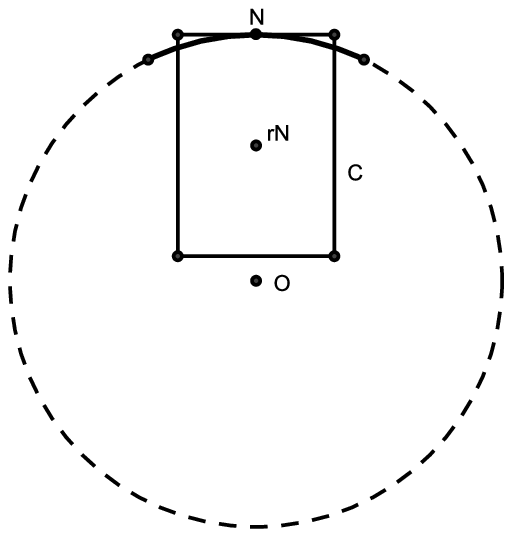}
\end{center}
\end{proof}
Here is the converse part of Theorem \ref{THMMAIN1}.
\begin{theorem}\label{THMAUBRYLIKE2}
Let $E\subset\sd$, let $\phi$ be a dimension function and let $\tau:(0,1)\to(0,+\infty)$
be nonincreasing with $\lim_{x\to 0^+}\tau(x)=+\infty$. Suppose that $\mathcal H^{\phi}(E)=0$ and that $\tau(s)=O\big(s^{-d}\phi(s)\big)$. Then there exists $f\in L^1(\sd)$ such that, for any $y \in E$, 
$$\limsup_{r\to 1}\frac{P[f](ry)}{\tau(1-r)}=+\infty.$$
\end{theorem}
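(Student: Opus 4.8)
The strategy is to build $f$ as a superposition of "spikes" concentrated near a well-chosen covering of $E$, and to make the radial divergence at points of $E$ come from Lemma \ref{LEMPOISSONKERNEL}. Since $\mathcal H^\phi(E)=0$, for each $n\ge 1$ we may pick a countable covering $(\kappa(y_{n,k},\rho_{n,k}))_k$ of $E$ by caps with $\sum_k \phi(\rho_{n,k})\le 2^{-n}$ (and, shrinking if necessary, with all radii small). The natural candidate is
$$
f=\sum_{n\ge1}\sum_k a_{n,k}\,\mathbf 1_{\kappa(y_{n,k},\rho_{n,k})},
$$
with weights $a_{n,k}$ to be tuned. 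The $L^1$-norm of the block indexed by $n$ is at most $\sum_k a_{n,k}\,\sigma(\kappa(y_{n,k},\rho_{n,k}))\le C\sum_k a_{n,k}\rho_{n,k}^d$, so to keep $f\in L^1(\sd)$ it suffices to have $a_{n,k}\rho_{n,k}^d\lesssim \phi(\rho_{n,k})$ up to the summable factor $2^{-n}$; in other words the choice $a_{n,k}=\phi(\rho_{n,k})/\rho_{n,k}^d$ gives $\|f\|_1\lesssim\sum_n 2^{-n}<\infty$.

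The next step is the lower bound for the radial behaviour at a point $y\in E$. Fix $n$; there is a $k=k(n)$ with $y\in\kappa(y_{n,k},\rho_{n,k})$, and then the cap $\kappa(y_{n,k},\rho_{n,k})$ contains the cap $\kappa(y,\rho_{n,k})$ up to a bounded dilation. Evaluating $P[f]$ at $r=1-\rho_{n,k}$ and keeping only the single term $a_{n,k}\mathbf 1_{\kappa(y_{n,k},\rho_{n,k})}$ (all terms are nonnegative, so this is legitimate), Lemma \ref{LEMPOISSONKERNEL} gives
$$
P[f]\big((1-\rho_{n,k})y\big)\ \ge\ a_{n,k}\int_{\kappa(y,\rho_{n,k})}P\big((1-\rho_{n,k})y,\xi\big)\,d\sigma(\xi)\ \ge\ C\,a_{n,k}\ =\ C\,\frac{\phi(\rho_{n,k})}{\rho_{n,k}^d}.
$$
By hypothesis $\tau(s)=O\big(s^{-d}\phi(s)\big)$, so the right-hand side is $\gtrsim \tau(\rho_{n,k})$ with a constant independent of $n$. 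Since the radii $\rho_{n,k(n)}$ can be assumed to tend to $0$ as $n\to\infty$ (discard finitely many large caps in each covering, or pass to a subsequence), we get $\limsup_{r\to1}P[f](ry)/\tau(1-r)\ge C>0$ for every $y\in E$.

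Finally one upgrades this positive $\limsup$ to $+\infty$. There are two standard devices: either replace $f$ by $\sum_m 2^{-m} T_m f$ where $T_m$ are rotations spreading independent copies, or — more simply — redo the construction with the covering of $E$ at "level $n$" weighted by an extra factor $\lambda_n\to\infty$ chosen slowly enough that $\sum_n \lambda_n 2^{-n}<\infty$ still holds (e.g. $\lambda_n=n$). With $a_{n,k}=\lambda_n\,\phi(\rho_{n,k})/\rho_{n,k}^d$ one keeps $f\in L^1$, while the displayed lower bound becomes $\ge C\lambda_n\tau(\rho_{n,k(n)})$, and letting $n\to\infty$ yields $\limsup_{r\to1}P[f](ry)/\tau(1-r)=+\infty$ for all $y\in E$. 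The one point needing care — and the main technical obstacle — is verifying that one may arrange the coverings so that the radii used at $y$ genuinely go to $0$ (so the $\limsup$ is taken along $r\to 1$) while simultaneously controlling $\|f\|_1$; this is where the freedom to refine each covering, together with the summability $\sum_k\phi(\rho_{n,k})\le 2^{-n}$ forcing individual radii to be small, is used.
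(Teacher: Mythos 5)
Your construction is correct and follows essentially the same route as the paper: build $f$ as a weighted sum of indicator functions of (doubled) caps covering $E$, control $\|f\|_1$ via the $\phi$-sums coming from $\mathcal H^\phi(E)=0$, lower-bound $P[f]$ at the matching radii $1-\rho$ using Lemma \ref{LEMPOISSONKERNEL}, and insert a slowly diverging factor $\lambda_n$ (the paper's $\omega_n$) to upgrade a positive $\limsup$ to $+\infty$. The only real difference is organizational: the paper first re-sorts the caps from all coverings into dyadic scale classes $\mathcal C_n$ so that a single radius $r_n=1-2^{-n}$ and weight $\omega_n\tau(2^{-n})$ serve the whole class, whereas you keep each covering intact and use cap-dependent radii $1-\rho_{n,k}$ and weights $\lambda_n\phi(\rho_{n,k})/\rho_{n,k}^d$; both are legitimate, and your observation that $\sum_k\phi(\rho_{n,k})\le 2^{-n}$ forces the individual radii used at each $y\in E$ to tend to zero is exactly what guarantees the $\limsup$ is genuinely taken along $r\to 1$.
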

A remarkable feature of Theorem \ref{THMAUBRYLIKE} and Theorem \ref{THMAUBRYLIKE2} is that they are sharp: if $\phi(s)=\tau(s)s^d$ is a dimension function and
$$\mathcal E(\tau,f)=\left\{y\in\sd;\ \limsup_{r\to 1}\frac{|P[f](ry)|}{\tau(1-r)}=+\infty\right\},$$
then
\begin{enumerate}
\item for any $f\in L^1(\sd)$, $\mathcal H^\phi\big(\mathcal E(\tau,f)\big)=0$;
\item if $E$ is a set satisfying $\mathcal H^\phi(E)=0$, we can find $f\in L^1(\sd)$ such that
$\mathcal E(\tau,f)\supset E$. 
\end{enumerate}
\begin{proof}[Proof of Theorem \ref{THMAUBRYLIKE2}]
Let $j\ge 1$. Since $\mathcal H^\phi(E)=0$, we can find a covering $\mathcal R_j$ of $E$
by caps with diameter less than $2^{-j}$ and such that $\sum_{\kappa\in\mathcal R_j} \phi(|\kappa|)\leq 2^{-j}.$ We collect together the caps with approximately the same size. Precisely, if $n\geq 1$, let
$$\mathcal C_n=\left\{\kappa\in\bigcup_j \mathcal R_j;\ 2^{-(n+1)}<|\kappa|\leq 2^{-n}\right\}.$$
Let also $E_n=\bigcup_{\kappa\in\mathcal C_n}\kappa$ so that $E\subset\limsup_n E_n$ and
$$\sum_{n\ge 1}\sum_{\kappa\in\mathcal C_n}\phi(|\kappa|)\leq\sum_{j\ge 1} \sum_{\kappa\in\mathcal R_j}
\phi(|\kappa|)\leq 1.$$
In particular, there exists a sequence $(\omega_n)_{n\ge 1}$ tending to infinity such that
$$\sum_{n\ge 1} \sum_{\kappa\in\mathcal C_n}\omega_n \phi(|\kappa|)<+\infty.$$
For any $n\geq 1$, let $x_{n,1},\dots,x_{n,{m_n}}$ be the centers of the caps
appearing in $\mathcal C_n$ and let
$\kappa_{n,i}=\kappa(x_{n_,i},2\cdot2^{-n})$. 
We define
$$f=\sum_{n\ge 1} \sum_{i=1}^{m_n}\omega_n \tau(2^{-n})\mathbf 1_{\kappa_{n,i}}.$$
$f$ belongs to $L^1(\sd)$. Indeed, 
\begin{eqnarray*}
\|f\|_1&\leq&C\sum_{n\ge 1}\sum_{i=1}^{m_n}\omega_n \tau(2^{-n})(2^{-n})^d\\
&\leq&C\sum_{n\ge 1}\sum_{i=1}^{m_n}\omega_n\phi(2^{-n})\\
&\leq&C\sum_{n\ge 1} \sum_{\kappa\in\mathcal C_n}\omega_n \phi(|\kappa|)<+\infty.
\end{eqnarray*}
Moreover, let $y\in E_n$ and let $r=1-2^{-n}$. Let also
$\kappa_y=\kappa(x_{n,i},\delta_{n,i})\in\mathcal C_n$ 
such that $y$ belongs to $\kappa_y$. It is clear that $\|y-x_{n,i}\|\le\delta_{n,i}\le 2^ {-n}$ so that 
$\kappa(y,2^{-n})\subset \kappa_{n,i}$. By the positivity of $f$ and of the Poisson kernel,
\begin{eqnarray*}
P[f](ry)&\geq&\int_{\kappa(y,2^{-n})}\omega_n\tau(2^{-n})P(ry,\xi)d\sigma(\xi)\\
&\geq&C \omega_n\tau(1-r)
\end{eqnarray*}
where $C$ is the constant that appears in  Lemma \ref{LEMPOISSONKERNEL}. 
Thus, provided $y$ belongs to $\limsup_n E_n$, we get 
$$\limsup_{r\to 1}\frac{P[f](ry)}{\tau(1-r)}=+\infty,$$
which is exactly what we need.
\end{proof}

\section{Construction of saturating functions}
In this section, we turn to the construction of functions in $L^1(\sd)$ having multifractal behaviour. Our first step is a construction of a sequence of nets in $\sd$ which play the same role as dyadic numbers in the interval.
\begin{lemma}
There exists a sequence  $(\mathcal R_n)_{n\geq 1}$ of finite subsets of $\mathcal S^d$  satisfying 
\begin{itemize}
\item $\mathcal R_n\subset \mathcal R_{n+1}$;
\item $\bigcup_{x\in\mathcal R_n}\kappa(x,2^{-n})=\sd$;
\item $\mathop{\rm card}\,(\mathcal R_n)\leq C2^{nd}$;
\item For any $x,y$ in $\mathcal R_n$, $x\neq y$, then $|x-y|\geq 2^{-n}$.
\end{itemize}
\end{lemma}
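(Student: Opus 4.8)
The plan is to build the nets $\mathcal R_n$ inductively, ensuring at each stage that the four required properties are satisfied while maintaining room for the extension to the next level. I would start from $\mathcal R_1$: pick a maximal subset of $\sd$ whose points are pairwise at distance at least $1/2$; maximality immediately gives the covering property $\bigcup_{x\in\mathcal R_1}\kappa(x,1/2)=\sd$ (any point not within $1/2$ of the chosen set could be added, contradicting maximality), and the separation property is built in. Finiteness and the cardinality bound $\mathrm{card}(\mathcal R_1)\le C$ follow from a standard volume/packing argument on $\sd$: the caps $\kappa(x,1/4)$ for $x\in\mathcal R_1$ are pairwise disjoint, each has $\sigma$-measure bounded below by a dimensional constant, and they all sit inside $\sd$ of total mass $1$.

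For the inductive step, suppose $\mathcal R_n$ has been constructed with the four properties. I would then enlarge $\mathcal R_n$ to $\mathcal R_{n+1}$ by adding points: consider the family of all finite subsets $F\subset\sd$ with $\mathcal R_n\subset\mathcal R_n\cup F$, such that the points of $\mathcal R_n\cup F$ are pairwise at distance $\ge 2^{-(n+1)}$, and take $\mathcal R_{n+1}=\mathcal R_n\cup F$ with $F$ maximal for inclusion (such a maximal $F$ exists because the separation condition forces $F$ to be finite, again by the packing bound). The crucial point to check is that the old points of $\mathcal R_n$ already satisfy the new separation requirement: since distinct points of $\mathcal R_n$ are at distance $\ge 2^{-n}\ge 2^{-(n+1)}$, this holds, so the inductive hypothesis is compatible with the construction. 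The nesting $\mathcal R_n\subset\mathcal R_{n+1}$ holds by construction. Maximality of $F$ yields the covering property at scale $2^{-(n+1)}$: if some $y\in\sd$ were at distance $\ge 2^{-(n+1)}$ from every point of $\mathcal R_{n+1}$, then $F\cup\{y\}$ would be a strictly larger admissible set, a contradiction; hence every $y$ lies within $2^{-(n+1)}$ of some point of $\mathcal R_{n+1}$, i.e. $\bigcup_{x\in\mathcal R_{n+1}}\kappa(x,2^{-(n+1)})=\sd$.

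It remains to control the cardinality. The caps $\kappa(x,2^{-(n+2)})$, $x\in\mathcal R_{n+1}$, are pairwise disjoint by the separation property, and each is contained in $\sd$; using the regularity of $\sigma$ with respect to $\phi(s)=s^d$ (the doubling dimension function on $\sd$ discussed in Section \ref{SECPREL}), we get $\sigma(\kappa(x,2^{-(n+2)}))\ge c\,2^{-(n+1)d}$ for a dimensional constant $c>0$. Summing over the disjoint caps gives $\mathrm{card}(\mathcal R_{n+1})\cdot c\,2^{-(n+1)d}\le\sigma(\sd)=1$, whence $\mathrm{card}(\mathcal R_{n+1})\le C\,2^{(n+1)d}$ with $C=1/c$ independent of $n$. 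This closes the induction and establishes all four properties simultaneously.

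The main obstacle, such as it is, is purely bookkeeping: one must make sure the maximal enlargement $F$ at each step is genuinely a \emph{finite} set so that $\mathcal R_{n+1}$ stays finite and the packing argument applies — this is where the lower bound on the measure of small caps, i.e. the Ahlfors regularity of $(\sd,\|\cdot\|,\sigma)$ with exponent $d$, does all the work. No deep idea is needed; the construction is the spherical analogue of choosing successively finer $2^{-n}$-separated maximal nets, and all constants can be traced back to a single dimensional packing constant for $\sd$.
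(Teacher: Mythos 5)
Your proposal is correct and follows essentially the same approach as the paper: construct $\mathcal R_{n+1}$ by extending $\mathcal R_n$ to a maximal $2^{-(n+1)}$-separated subset of $\sd$, deduce the covering property from maximality, and obtain the cardinality bound from the pairwise disjointness of the caps $\kappa(x,2^{-(n+2)})$ together with the $d$-regularity of $\sigma$. The only cosmetic difference is that the paper starts the induction at $\mathcal R_0=\varnothing$ rather than $\mathcal R_1$.
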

\begin{proof}
Let $\mathcal R_0=\varnothing$ and let us explain how to construct $\mathcal R_{n+1}$
from $\mathcal R_n$. $\mathcal R_{n+1}$ is a maximal subset of $\sd$ containing $\mathcal R_n$
and such that any distinct points in $\mathcal R_{n+1}$ have their distance greater than or
equal to $2^{-(n+1)}$. Then $\bigcup_{x\in\mathcal R_{n+1}}\kappa\left(x,2^{-(n+1)}\right)=\sd$ 
by maximality of $\mathcal R_{n+1}$. Then, taking the surface and using that the caps $\kappa\left(x,2^{-(n+2)}\right)$, 
$x\in\mathcal R_{n+1}$, are pairwise disjoint, we get
$$\mathop{\rm card}\,(\mathcal R_{n+1})\times C2^{-(n+2)d}\leq 1.$$
\end{proof}
From now on, we fix a sequence $(\mathcal R_n)_{n\ge 0}$ as in the previous lemma. Our sets with big Hausdorff dimension will be based on open caps centered at points of $\mathcal R_n$. Precisely, let $\alpha>1$ and let $N_{n,\alpha}=[n/\alpha]+1$ where $[n/\alpha]$ denotes the integer part of $n/\alpha$. We introduce
$$D_{n,\alpha}=\bigcup_{x\in \mathcal R_{N_{n,\alpha}}}\kappa\left(x,2^{-n}\right).$$

\begin{lemma}\label{LEMHD}
Let $\alpha>1$ and let $(n_k)_{k\ge 0}$ be a sequence of integers growing to infinity. Then
$$\mathcal H^{d/\alpha}\left(\limsup_{k\to +\infty}D_{n_k,\alpha}\right)=+\infty.$$
\end{lemma}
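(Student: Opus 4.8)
The plan is to apply the mass transference principle (Lemma~\ref{LEMMTP}) with the reference dimension function $\phi(x)=x^d$ and target dimension function $\psi(x)=x^{d/\alpha}$. Note that $\psi(x)/\phi(x)=x^{d/\alpha-d}$ is monotonic (decreasing, since $\alpha>1$), so the hypothesis on $\psi$ is satisfied. The balls to which I will apply the principle are the caps $\kappa(x,2^{-n_k})$ for $x\in\mathcal R_{N_{n_k,\alpha}}$; their radii tend to zero as $k\to\infty$. The key computation is to identify $\kappa(x,2^{-n_k})^\psi$: by definition $B^\psi=B(x,\psi^{-1}\circ\phi(r))$, and $\psi^{-1}(t)=t^{\alpha/d}$, so $\psi^{-1}(\phi(2^{-n_k}))=\psi^{-1}(2^{-n_kd})=2^{-n_k\alpha}$. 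Wait — I should double-check the exponent so that the inflated caps have radius comparable to $2^{-N_{n_k,\alpha}}$: since $N_{n,\alpha}=[n/\alpha]+1\approx n/\alpha$, the natural "already full" scale is $2^{-n/\alpha}$, i.e. the caps $\kappa(x,2^{-n/\alpha})$ with $x\in\mathcal R_{N_{n,\alpha}}$ cover $\sd$. So in fact I want $\phi$ and $\psi$ arranged so that $\kappa(x,2^{-n})^\psi$ has radius $\approx 2^{-n/\alpha}$; this forces $\psi(2^{-n/\alpha})=\phi(2^{-n})=2^{-nd}$, giving $\psi(t)=t^{\alpha\cdot d/1}$... let me instead take $\phi(x)=x^d$ as the ambient one fixed by the space and $\psi(x)=x^{d/\alpha}$, for which $\psi^{-1}\circ\phi(x)=(x^d)^{\alpha/d}=x^\alpha$, so $\kappa(x,2^{-n})^\psi=\kappa(x,2^{-n\alpha})$. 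That is the wrong direction. The correct choice is to run the principle "backwards": set the sequence of balls to be $B_i=\kappa(x,2^{-N_{n_k,\alpha}\cdot ?})$...

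Let me restate cleanly. I would set $\psi(x)=x^{d/\alpha}$ and apply Lemma~\ref{LEMMTP} to the sequence of balls $B_i$ ranging over all caps $\kappa(x,\rho_{n_k})$, $x\in\mathcal R_{N_{n_k,\alpha}}$, where $\rho_{n_k}$ is chosen so that $B_i^\psi=\kappa(x,2^{-n_k})$. Since $B^\psi=B(x,\psi^{-1}\circ\phi(r))$ with $\phi(x)=x^d$, we need $\psi^{-1}((\rho_{n_k})^d)=2^{-n_k}$, i.e. $((\rho_{n_k})^d)^{\alpha/d}=2^{-n_k}$, i.e. $\rho_{n_k}=2^{-n_k/\alpha}$. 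Thus $B_i=\kappa(x,2^{-n_k/\alpha})$ for $x\in\mathcal R_{N_{n_k,\alpha}}$, and since $N_{n_k,\alpha}=[n_k/\alpha]+1$, we have $2^{-N_{n_k,\alpha}}\le 2^{-n_k/\alpha}\le 2\cdot 2^{-N_{n_k,\alpha}}$, so these $B_i$ have radius comparable to $2^{-N_{n_k,\alpha}}$. To verify the hypothesis of Lemma~\ref{LEMMTP}, I must show that for every ball $B$ in $\sd$, $\mathcal H^\phi\big(B\cap\limsup_i B_i\big)=\mathcal H^\phi(B)$, i.e. that $\limsup_k\bigcup_{x\in\mathcal R_{N_{n_k,\alpha}}}\kappa(x,c\,2^{-N_{n_k,\alpha}})$ has full $\mathcal H^d$-measure in every ball. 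This is where the net property enters: since $\bigcup_{x\in\mathcal R_N}\kappa(x,2^{-N})=\sd$, each individual union $\bigcup_{x\in\mathcal R_{N_{n_k,\alpha}}}\kappa(x,2^{-n_k/\alpha})$ already covers all of $\sd$ (as $2^{-n_k/\alpha}\ge 2^{-N_{n_k,\alpha}}$), so in particular $\limsup_k$ of them is all of $\sd$, and trivially $\mathcal H^\phi(B\cap\sd)=\mathcal H^\phi(B)$.

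With the hypothesis verified, Lemma~\ref{LEMMTP} yields $\mathcal H^\psi\big(B\cap\limsup_i B_i^\psi\big)=\mathcal H^\psi(B)$ for every ball $B$, that is, $\mathcal H^{d/\alpha}\big(B\cap\limsup_k D_{n_k,\alpha}\big)=\mathcal H^{d/\alpha}(B)$ — recalling $B_i^\psi=\kappa(x,2^{-n_k})$ and $D_{n_k,\alpha}=\bigcup_{x\in\mathcal R_{N_{n_k,\alpha}}}\kappa(x,2^{-n_k})$. Taking $B$ to be a ball in $\sd$ of positive radius, $\mathcal H^{d/\alpha}(B)>0$; and in fact since the space $\sd$ supports the doubling dimension function $x^d$ with $\mathcal H^{d}(B)\approx |B|^d$, and $d/\alpha<d$, one has $\mathcal H^{d/\alpha}(\sd)=+\infty$. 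Hence $\mathcal H^{d/\alpha}\big(\limsup_k D_{n_k,\alpha}\big)=+\infty$, which is the claim.

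The main obstacle is purely bookkeeping: getting the roles of $\phi$ and $\psi$, and correspondingly the direction of the inflation $B\mapsto B^\psi$, oriented correctly so that the \emph{small} balls $B_i$ (radius $\approx 2^{-N_{n_k,\alpha}}$) are the ones whose union is already all of $\sd$, while the inflated balls $B_i^\psi$ (radius $2^{-n_k}$, much smaller!) are the ones making up $D_{n_k,\alpha}$. This is the counterintuitive point of the mass transference principle: one transfers full measure from a $\limsup$ of \emph{large} balls in the coarse gauge $\phi$ to a $\limsup$ of \emph{small} balls in the fine gauge $\psi$. Once this is set up correctly, the verification of the hypothesis is immediate from the covering property of the nets $\mathcal R_n$, and there is essentially nothing else to check beyond the elementary monotonicity of $\psi/\phi$ and the comparison $2^{-n_k/\alpha}\asymp 2^{-N_{n_k,\alpha}}$.
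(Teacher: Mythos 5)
Your argument is correct and is essentially the paper's proof: both apply the mass transference principle with $\phi(x)=x^d$, $\psi(x)=x^{d/\alpha}$, use the net covering property $\bigcup_{x\in\mathcal R_N}\kappa(x,2^{-N})=\sd$ to verify the full-measure hypothesis, and conclude from $d/\alpha<d$. The only cosmetic difference is that you tune the input radius to $2^{-n_k/\alpha}$ so that $B_i^\psi$ is exactly $\kappa(x,2^{-n_k})$, whereas the paper takes $B_i=\kappa(x,2^{-N_{n_k,\alpha}})$ and uses the inclusion $B_i^\psi\subset\kappa(x,2^{-n_k})$ (from $\alpha N_{n,\alpha}\ge n$) plus monotonicity of Hausdorff measure.
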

\begin{proof}
This follows from an application of the mass transference principle (Lemma \ref{LEMMTP}), applied with the function
$\psi(x)=x^{d/\alpha}$ and $\phi(x)=x^d$. The key points are that
$$\bigcup_{x\in\mathcal R_{N_{n,\alpha}}}\kappa\left(x,2^{-N_{n,\alpha}}\right)=\sd$$
and that $\kappa\left(x,2^{-n}\right)\supset \kappa\left(x,\psi^{-1}\circ \phi(2^{-N_{n,\alpha}})\right)$ since $\alpha N_{n,\alpha}\geq n$.
\end{proof}

We now construct saturating functions step by step.

\begin{lemma}\label{LEMSAT}
Let $n\geq 1$. There exists a nonnegative fonction  $f_n\in L^1(\sd)$, satisfying $\|f_n\|_1=1$, such that,  for any $\alpha>1$, 
for any $y\in D_{n,\alpha}$,
$$P[f_n](r_ny)\geq\frac C{n}2^{(n-N_{n,\alpha})d},$$
where $1-r_n={2^{-n}}$, $N_{n,\alpha}=[n/\alpha]+1$ and $C$ is independent of $n$ and $\alpha$.
\end{lemma}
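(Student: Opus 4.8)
\textbf{Proof proposal for Lemma \ref{LEMSAT}.}

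The plan is to build $f_n$ as a normalized sum of indicator functions of small caps, one centered at each point of the finest net that could possibly be relevant, namely $\mathcal R_n$, and then to check that the resulting Poisson integral, evaluated at radius $r_n = 1-2^{-n}$, is large at every point of $D_{n,\alpha}$ no matter which $\alpha>1$ we pick. Concretely, I would set
$$f_n = \frac{1}{Z_n}\sum_{x\in\mathcal R_n}\mathbf 1_{\kappa(x,2^{-n})},$$
where $Z_n$ is chosen so that $\|f_n\|_1 = 1$. Since $\mathop{\rm card}(\mathcal R_n)\le C2^{nd}$ and each cap has $\sigma(\kappa(x,2^{-n}))\approx (2^{-n})^d$, we get $Z_n \le C$ (an absolute constant); a lower bound $Z_n \ge c\,\mathop{\rm card}(\mathcal R_n)(2^{-n})^d$ is not needed, only the upper bound, so that after normalization the coefficient in front of each indicator is at least $c\,(2^{-n})^{-d} = c\,2^{nd}/\mathop{\rm card}(\mathcal R_n)$... but more cleanly: since $\sum_{x\in\mathcal R_n}\sigma(\kappa(x,2^{-n}))\le C$ (the caps need not be disjoint, but bounded overlap or just the cardinality bound gives this), the normalizing constant satisfies $Z_n\le C$, hence $f_n \ge \frac1C \sum_{x\in\mathcal R_n}\mathbf 1_{\kappa(x,2^{-n})}$ pointwise.

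Next I would fix $\alpha>1$ and $y\in D_{n,\alpha}$. By definition of $D_{n,\alpha}$ there is a point $x_0\in\mathcal R_{N_{n,\alpha}}$ with $y\in\kappa(x_0,2^{-n})$. The key geometric observation is that, because $\mathcal R_{N_{n,\alpha}}\subset\mathcal R_n$ (the nets are nested and $N_{n,\alpha}\le n$), the point $x_0$ is itself one of the centers appearing in $f_n$; moreover \emph{every} point $x\in\mathcal R_n$ lying in $\kappa(x_0, 2^{-N_{n,\alpha}})$ contributes a full cap $\kappa(x,2^{-n})$ to $f_n$, and by the separation property of $\mathcal R_n$ there are at least $\sim (2^{-N_{n,\alpha}}/2^{-n})^d = 2^{(n-N_{n,\alpha})d}$ such points (a standard volume-packing count on the sphere). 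For each such center $x$, the cap $\kappa(x,2^{-n})$ lies inside $\kappa(y, C2^{-N_{n,\alpha}})$, and these caps are essentially disjoint. Writing $\rho = 2^{-n} = 1-r_n$ and using positivity of the Poisson kernel together with Lemma \ref{LEMPOISSONKERNEL} (applied with $y$ and radius $\rho = 1-r_n$), I would estimate
$$P[f_n](r_n y) \ge \frac1C\sum_{x\in\mathcal R_n\cap\kappa(x_0,2^{-N_{n,\alpha}})}\int_{\kappa(x,2^{-n})}P(r_n y,\xi)\,d\sigma(\xi).$$
Each such cap $\kappa(x,2^{-n})$ contains a sub-cap of the form $\kappa(\cdot, c\,2^{-n})$ on which $P(r_ny,\cdot)$ can be bounded below; alternatively, and more robustly, I would note that on the union of these $\sim 2^{(n-N_{n,\alpha})d}$ disjoint caps the integrand $P(r_ny,\xi)$ is (by Harnack / the explicit formula for the Poisson kernel, exactly as in the proof of Lemma \ref{LEMHL}) comparable to its value on $\kappa(y, 2^{-N_{n,\alpha}})$, and $\int_{\kappa(y,2^{-N_{n,\alpha}})}P(r_ny,\xi)\,d\sigma(\xi)$ is bounded below by a constant (again Lemma \ref{LEMPOISSONKERNEL}, since $2^{-N_{n,\alpha}}\ge 2^{-n} = 1-r_n$). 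Splitting that constant mass over the $\sim 2^{(n-N_{n,\alpha})d}$ caps that carry it, and multiplying by the number of such caps and the coefficient $\frac1C$, gives the claimed lower bound $\frac Cn 2^{(n-N_{n,\alpha})d}$ — the factor $1/n$ being deliberately wasteful to absorb any logarithmic slack in the packing count and the normalization.

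The main obstacle, and the step deserving genuine care, is the geometric double-counting: I need a \emph{uniform} lower bound on the number of points of $\mathcal R_n$ sitting inside a cap of radius $2^{-N_{n,\alpha}}$ that each give rise to \emph{disjoint} (or boundedly-overlapping) small caps $\kappa(x,2^{-n})$ \emph{all contained in a fixed cap around $y$ on which the Poisson kernel is under control}. This requires combining the covering property ($\mathcal R_n$'s caps of radius $2^{-n}$ cover $\sd$), the separation property ($2^{-n}$-separation of $\mathcal R_n$), and the inclusion $\mathcal R_{N_{n,\alpha}}\subset\mathcal R_n$, while tracking that all constants are independent of both $n$ and $\alpha$ — the latter being the delicate point since $N_{n,\alpha}$ ranges over all of $\{1,\dots,n\}$ as $\alpha$ varies. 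Once this packing estimate is in hand, the Poisson-kernel lower bound is a routine application of Lemma \ref{LEMPOISSONKERNEL} and the scale-invariance already exploited there.
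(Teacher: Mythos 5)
There is a fatal flaw in the proposal: the function you construct,
$$f_n = \frac{1}{Z_n}\sum_{x\in\mathcal R_n}\mathbf 1_{\kappa(x,2^{-n})},$$
is uniformly bounded in $L^\infty(\sd)$, so its Poisson integral is uniformly bounded too, and the claimed lower bound cannot hold. Indeed, since the points of $\mathcal R_n$ are $2^{-n}$-separated, the caps $\kappa(x,2^{-n})$, $x\in\mathcal R_n$, have bounded overlap, so $\sum_{x\in\mathcal R_n}\mathbf 1_{\kappa(x,2^{-n})}\leq C_0$ pointwise with $C_0$ independent of $n$. Combined with $Z_n\geq\sigma(\bigcup_x\kappa(x,2^{-n}))=\sigma(\sd)=1$ (a consequence of the covering property), this gives $\|f_n\|_\infty\leq C_0$, and hence $P[f_n](r_n y)\leq C_0$ for every $r_n$ and $y$, because $\int_{\sd}P(r_n y,\xi)\,d\sigma(\xi)=1$. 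But the lemma requires $P[f_n](r_n y)\geq \frac{C}{n}2^{(n-N_{n,\alpha})d}$ for every $\alpha>1$; taking $\alpha$ large forces $N_{n,\alpha}=1$, and the right-hand side becomes $\frac{C}{n}2^{(n-1)d}$, which is unbounded in $n$. The same normalization $\int_{\sd}P(r_n y,\cdot)\,d\sigma=1$ also shows that your final step (``splitting that constant mass over $\sim 2^{(n-N_{n,\alpha})d}$ caps... and multiplying by the number of such caps'') is a double-counting error: the sum $\sum_x\int_{\kappa(x,2^{-n})}P(r_ny,\xi)\,d\sigma(\xi)$ already \emph{is} the total contribution of those caps, and it is bounded by a constant; you cannot multiply by the number of caps a second time.

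What your construction is missing is a scale-dependent \emph{weight}. The paper's function is
$$\tilde f_n=\frac{1}{n+1}\sum_{N=1}^{n+1}\sum_{x\in\mathcal R_N}2^{(n-N)d}\mathbf 1_{\kappa(x,2\cdot 2^{-n})},$$
i.e.\ a union over \emph{all} net scales $N\leq n+1$, where the caps at scale $N$ carry the large coefficient $2^{(n-N)d}$. This makes $\tilde f_n$ highly \emph{unbounded} near each $x\in\mathcal R_N$ when $N$ is small (tall, thin spikes), while keeping $\|\tilde f_n\|_1\leq C$ because $\mathop{\rm card}(\mathcal R_N)\cdot 2^{(n-N)d}\cdot(2^{-n})^d\leq C$ for each $N$, and the factor $1/(n+1)$ absorbs the sum over $N$. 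Then, for $y\in D_{n,\alpha}$, one simply discards all terms except the single $N=N_{n,\alpha}$ layer, picks the cap $\kappa(x,2\cdot 2^{-n})$ with $x\in\mathcal R_{N_{n,\alpha}}$ containing $\kappa(y,2^{-n})$, and applies Lemma~\ref{LEMPOISSONKERNEL} to the spike of height $\frac{2^{(n-N_{n,\alpha})d}}{n+1}$. No packing count is needed at all: the desired size $2^{(n-N_{n,\alpha})d}$ comes directly from the chosen weight, not from counting caps. This is the idea your proposal lacks.
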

\begin{proof}
We define $\tilde f_n$ by
$$\tilde f_n:=\frac 1{n+1}\sum_{N=1}^{n+1} \sum_{x\in\mathcal R_N}2^{(n-N)d}\mathbf 1_{\kappa(x,2\cdot 2^{-n})}.$$
The triangle inequality ensures that
\begin{eqnarray*}
\|\tilde f_n\|_1&\leq&\frac C{n+1}\sum_{N=1}^{n+1} \mathop{\rm card}\,(\mathcal R_N)2^{(n-N)d}2^{-nd}\\
&\leq &C.
\end{eqnarray*}
 Let $y\in D_{n,\alpha}$ and let $x\in\mathcal R_{N_{n,\alpha}}$ such that
 $y\in \kappa\left(x,2^{-n}\right)$. Observe that
 $\kappa\left(y,2^{-n}\right)\subset \kappa\left(x,2.2^{-n}\right)$. Moreover,
 $1\le N_{n,\alpha}\le n+1$. Using  the positivity  of the Poisson kernel, we get
$$P[\tilde f_n](ry)\geq\int_{\kappa\left(y,2^{-n}\right)}\frac{2^{(n-N_{n,\alpha})d}}{n+1}P(ry,\xi)d\sigma(\xi).$$
 Lemma \ref{LEMPOISSONKERNEL} ensures that
$$P[\tilde f_n](r_ny)\ge \frac{C}{n+1}2^{(n-N_{n,\alpha})d} $$
and it suffices to take $f_n=\frac{\tilde f_n}{\Vert \tilde f_n\Vert_1}$.
\end{proof}

We are now ready for the proof of our second main theorem.
\begin{proof}[Proof of Theorem \ref{THMMAIN2}]
Let $(g_n)_{n\ge 1}$ be a dense sequence of $L^1(\sd)$ such that each $g_n$ is continuous and $\|g_n\|_\infty\leq n$. The maximum principle ensures that  for any $r\in(0,1)$ and for any 
$\xi\in\sd$,
$$|P[g_n](r\xi)|\leq n.$$
Let $(f_n)$ be the sequence given by Lemma \ref{LEMSAT} and let us set
$$h_n=g_n+\frac 1n f_n.$$
$(h_n)_{n\ge 1}$ remains dense in $L^1(\sd)$. Moreover, if  $r_n=1-{2^{-n}}$,   $\alpha>1$ and  $y\in D_{n,\alpha}$, 
\begin{eqnarray*}
P[h_n](r_ny)&\geq&C\frac{2^{\left(n-N_{n,\alpha}\right)d}}{n^2}-n\\
&\geq& C \frac{2^{\left(n-N_{n,\alpha}\right )d}}{2n^2}
\end{eqnarray*}
provided $n$ is sufficiently large. Let us finally consider  $\delta_n>0$ sufficiently small  such that
$$\|P[f](r_n\cdot)\|_\infty\leq 1\quad\mbox{if}\quad \|f\|_1\le \delta_n.$$
The residual set we will consider is the dense $G_\delta$-set
$$A=\bigcap_{l\ge 1}\bigcup_{n\geq l}B_{L^1}(h_n,\delta_n).$$
Pick any $f\in A$. One can find an increasing sequence of integers $(n_k)$
such that $f\in B_{L^1}(h_{n_k},\delta_{n_k})$ for any $k$. Let $\alpha>1$
and let $y\in\limsup_k D_{n_k,\alpha}=:D_\alpha(f)$.
Then we can find integers $n$, picked in the sequence $(n_k)_{k\ge 1}$, as large as we want such that
$$P[f](r_n y)\geq P[h_n](r_ny)-1\geq   C \frac{2^{\left(n-N_{n,\alpha}\right)d}}{2n^2}-1.$$
Observe that for such values of $n$,
$$\frac{\log|P[f](r_ny)|}{-\log(1-r_n)}\ge\frac{\left( n-N_{n,\alpha}\right)d}{n}+o(1).$$
Hence, 
$$\limsup_{r\to 1}\frac{\log |P[f](ry)|}{-\log(1-r)}\geq \lim_{n\to +\infty}\left(1-\frac{N_{n,\alpha}}n\right)d=\left(1-\frac1\alpha\right)d.$$
Furthermore, Lemma \ref{LEMHD} tells us that $\mathcal H^{d/\alpha}(D_\alpha(f))=+\infty$.
We divide $D_\alpha(f)$ into two parts:
\begin{eqnarray*}
D_\alpha^{(1)}(f)&=&\left\{y\in D_\alpha(f);\ \limsup_{r\to 1}\frac{\log |P[f](ry)|}{-\log(1-r)}=\left(1-\frac1\alpha\right)d\right\}\\
D_\alpha^{(2)}(f)&= &\left\{y\in D_\alpha(f);\ \limsup_{r\to 1}\frac{\log |P[f](ry)|}{-\log(1-r)}> \left(1-\frac1\alpha\right)d\right\}.
\end{eqnarray*}
Let $(\beta_n)_{n\ge 0}$ be a sequence of real numbers such that $$\beta_n>\left(1-\frac1\alpha\right)d\quad\mbox{and}\quad\lim_{n\to +\infty} \beta_n=\left(1-\frac1\alpha\right)d.$$
Then 
$$D_\alpha^{(2)}(f)\subset\bigcup_{n\geq 0}\mathcal E(\beta_n,f).$$
Observe that $\frac{d}\alpha>d-\beta_n$. Then,  by Corollary \ref{CORAUBRYLIKE}, 
$\mathcal H^{d/\alpha}(\mathcal E(\beta_n,f))=0$. We get  
$$\mathcal  H^{d/\alpha}(D_\alpha^{(2)}(f))=0\quad\mbox{and}\quad\mathcal H^{d/\alpha}(D_\alpha^{(1)}(f))=+\infty.$$
Finally, 
$$E\left(\left(1-\frac1\alpha\right)d,f\right)\supset D_\alpha^{(1)}(f)$$
and
$$\dim_{\mathcal H}\left(E\left(\left(1-\frac1\alpha\right)d,f\right)\right)\geq \frac d\alpha.$$
By Corollary \ref{CORAUBRYLIKE} again, this inequality is necessarily an equality, and we conclude
that $f$ satisfies the conclusion of Theorem \ref{THMMAIN2} by setting 
$$\left(1-\frac1\alpha\right)d=\beta\iff \frac d\alpha=d-\beta.$$
\end{proof}

One can also ask whether the Poisson integral of a typical Borel measure on $\sd$ has a multifractal behaviour.
Here, we have to take care of the topology on $\mathcal M(\sd)$. We endow it with the
weak-star topology, which turns the unit ball $B_{\mathcal M(\sd)}$ of the dual space $\mathcal M(\sd)$ into a compact space. We need the following folklore lemma:
\begin{lemma}
The set of measures $fd\sigma$, with $f\in\mathcal C(\sd)$, is weak-star dense in $\mathcal M(\sd)$.
\end{lemma}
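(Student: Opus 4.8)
The plan is to use the standard duality characterisation of weak-star density: a linear subspace $V$ of a dual Banach space $X^\ast$ is weak-star dense if and only if its pre-annihilator $V_\perp=\{x\in X;\ \langle x,v\rangle=0\ \text{for all }v\in V\}$ is reduced to $\{0\}$. Here we take $X=\mathcal C(\sd)$, so that $X^\ast=\mathcal M(\sd)$ by the Riesz representation theorem, and $V=\{fd\sigma;\ f\in\mathcal C(\sd)\}$.

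First I would record the duality fact, which is a consequence of the Hahn--Banach theorem: if $V$ were not weak-star dense, a measure $\mu_0$ outside the weak-star closure $\overline V^{\,w\ast}$ could be separated from that closed subspace by a weak-star continuous linear functional on $\mathcal M(\sd)$; but such functionals are exactly the evaluations $\mu\mapsto\int_{\sd}g\,d\mu$ for $g\in\mathcal C(\sd)$, so we would obtain $g\in\mathcal C(\sd)$ annihilating $V$ with $\int g\,d\mu_0\neq0$. It then remains to compute $V_\perp$: if $g\in\mathcal C(\sd)$ satisfies $\int_{\sd}gf\,d\sigma=0$ for every $f\in\mathcal C(\sd)$, then choosing $f=\bar g$ gives $\int_{\sd}|g|^2\,d\sigma=0$, whence $g\equiv0$ by continuity. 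Thus $V_\perp=\{0\}$ and $V$ is weak-star dense in $\mathcal M(\sd)$.

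Alternatively, and more in keeping with the Poisson-theoretic flavour of the paper, one can exhibit an explicit approximating family. For $\mu\in\mathcal M(\sd)$ and $r\in(0,1)$, the function $P[\mu](r\cdot)$ is continuous (indeed real-analytic) on $\sd$, and I claim that $P[\mu](r\cdot)\,d\sigma\to\mu$ in the weak-star topology as $r\to1^-$. Indeed, by Fubini's theorem and the symmetry of the Poisson kernel, for any $g\in\mathcal C(\sd)$ one has $\int_{\sd}g(\xi)P[\mu](r\xi)\,d\sigma(\xi)=\int_{\sd}P[g](r\eta)\,d\mu(\eta)$, and $P[g](r\cdot)\to g$ uniformly on $\sd$ as $r\to1$ (the Poisson integral of a continuous function on the compact set $\sd$ extends continuously to $\overline{\bd}$). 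Hence $\int_{\sd}g(\xi)P[\mu](r\xi)\,d\sigma(\xi)\to\int_{\sd}g\,d\mu$, which is exactly weak-star convergence.

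I do not expect a genuine obstacle here — this is a folklore statement — the only point requiring a little care is to invoke the separation argument in the correct direction, namely using that the weak-star continuous functionals on $\mathcal M(\sd)=\mathcal C(\sd)^\ast$ are precisely the elements of the predual $\mathcal C(\sd)$; in the second approach the only mild verification is the uniform convergence $P[g](r\cdot)\to g$ for continuous $g$.
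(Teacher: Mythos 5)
Both of your arguments are correct, and both take a genuinely different route from the paper. The paper's proof first invokes (citing Billingsley) the weak-star density of finitely supported measures in $\mathcal M(\sd)$, then approximates a Dirac mass $\delta_\xi$ in a basic weak-star neighbourhood by $f\,d\sigma$ with $f$ a continuous nonnegative bump of unit mass supported in a small cap around $\xi$; the conclusion follows by linearity. Your first argument dualizes instead: it uses Hahn--Banach separation together with the identification of weak-star continuous functionals on $\mathcal C(\sd)^\ast$ with elements of $\mathcal C(\sd)$, reducing the claim to the triviality of the pre-annihilator, which is immediate upon testing against $\bar g$. Your second argument produces an explicit approximating family $P[\mu](r\cdot)\,d\sigma\to\mu$ as $r\to 1^-$, via the symmetry $P(r\xi,\eta)=P(r\eta,\xi)$ for $\xi,\eta\in\sd$, Fubini, and uniform convergence $P[g](r\cdot)\to g$ for continuous $g$. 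Each approach has its merits: the paper's is the most elementary (no Hahn--Banach, no Poisson-kernel convergence theory) at the cost of an external reference; your duality proof is the shortest and self-contained modulo standard functional analysis; your Poisson-kernel proof is the most in keeping with the paper's theme and has the pleasant feature of replacing the two-step reduction (finite-support measures, then Diracs) with a single concrete smoothing net, though it imports the uniform boundary convergence of Poisson integrals of continuous functions, which is heavier machinery than the paper uses for what it labels a folklore lemma.
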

\begin{proof}
The set of measures with finite support is weak-star dense in $\mathcal M(\sd)$ (see for instance \cite{Bil}). Thus, let $\xi\in\sd$, let $\veps>0$ and let $g_1,\dots,g_n\in\mathcal C(\sd)$. It suffices to prove that one can find $f\in\mathcal C(\sd)$ such that, for any $\veps>0$, for
any $i\in\{1,\dots,n\}$, 
$$\left|g_i(\xi)-\int_{\sd}g_i(y)f(y)d\sigma(y)\right|<\veps.$$
Since each $g_i$ is continuous at $\xi$, one can find $\delta>0$ such that $|\xi-y|<\delta$ implies $|g_i(\xi)-g_i(y)|<\veps$. Let $f$ be a continuous and nonnegative function on $\sd$ with support
in $\kappa(\xi,\delta)$ and whose integral is equal to 1. Then
\begin{eqnarray*}
\left|g_i(\xi)-\int_{\sd}g_i(y)f(y)d\sigma(y)\right|&\leq&\int_{\kappa(\xi,\delta)}|g_i(\xi)-g_i(y)|f(y)d\sigma(y)\\
&\leq&\veps.
\end{eqnarray*}
\end{proof}
Mimicking the proof of Theorem \ref{THMMAIN2}, we can prove the following result.
\begin{theorem}
For quasi-all measures $\mu\in B_{\mathcal M(\sd)}$, for any $\beta\in [0,d]$, 
$$\dimh\big(E(\beta,\mu)\big)=d-\beta.$$
\end{theorem}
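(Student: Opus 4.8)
The plan is to mimic the proof of Theorem \ref{THMMAIN2}, replacing the Baire category argument in $L^1(\sd)$ by one in the compact metrizable space $(B_{\mathcal M(\sd)},w^*)$. First I would fix a metric $\rho$ inducing the weak-star topology on $B_{\mathcal M(\sd)}$ and use the folklore lemma just proved to pick a sequence $(g_n)$ of continuous functions with $\|g_n\|_1\le 1$ such that the measures $g_n\,d\sigma$ are weak-star dense in $B_{\mathcal M(\sd)}$; I would also arrange (by truncation) that $\|g_n\|_\infty\le n$, so that $|P[g_n](r\xi)|\le n$ for all $r,\xi$, exactly as before. Then, with $(f_n)$ the saturating functions from Lemma \ref{LEMSAT}, I would set $h_n=\frac{n-1}{n}g_n+\frac1{n^2}f_n$ (a convex combination keeping $\|h_n\|_1\le 1$, hence $h_n\,d\sigma\in B_{\mathcal M(\sd)}$), check that $(h_n\,d\sigma)$ is still weak-star dense, and record that for $r_n=1-2^{-n}$, $\alpha>1$ and $y\in D_{n,\alpha}$ one has $P[h_n](r_ny)\ge \frac{C}{2n^3}2^{(n-N_{n,\alpha})d}-n\ge \frac{C}{4n^3}2^{(n-N_{n,\alpha})d}$ for $n$ large.

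The delicate point is that weak-star convergence $\mu_k\to\mu$ does NOT control $P[\mu_k](r_n y)$ pointwise for a \emph{fixed} $r_n$: the Poisson kernel $\xi\mapsto P(r_n y,\xi)$ is continuous on $\sd$, so $P[\mu](r_n y)=\int P(r_n y,\xi)\,d\mu(\xi)$ is weak-star continuous in $\mu$ for each fixed $(r_n,y)$, but the modulus of continuity blows up as $r_n\to1$ (the kernel has $L^\infty$-norm $\sim(1-r_n)^{-d}$). To handle this I would, for each $n$, choose $\eta_n>0$ small enough that $\rho(\mu,h_n\,d\sigma)<\eta_n$ implies $|P[\mu](r_n\xi)-P[h_n](r_n\xi)|\le 1$ for every $\xi\in\sd$ simultaneously — this is possible because $\{P(r_n\xi,\cdot):\xi\in\sd\}$ is a norm-bounded, equicontinuous (for fixed $n$) family in $\mathcal C(\sd)$, hence relatively compact, so weak-star convergence is uniform over it. Then the residual $G_\delta$ set is $A=\bigcap_{\ell\ge1}\bigcup_{n\ge\ell}\{\mu\in B_{\mathcal M(\sd)}:\rho(\mu,h_n\,d\sigma)<\eta_n\}$, which is dense (by weak-star density of the $h_n\,d\sigma$) and a $G_\delta$, hence residual by Baire since $B_{\mathcal M(\sd)}$ is a compact metric space.

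From there the argument is identical to that of Theorem \ref{THMMAIN2}. Given $\mu\in A$, pick $n_k\to\infty$ with $\rho(\mu,h_{n_k}d\sigma)<\eta_{n_k}$; for $\alpha>1$ and $y\in\limsup_k D_{n_k,\alpha}=:D_\alpha(\mu)$ one gets $P[\mu](r_{n_k}y)\ge \frac{C}{4n_k^3}2^{(n_k-N_{n_k,\alpha})d}-1$, whence $\limsup_{r\to1}\frac{\log|P[\mu](ry)|}{-\log(1-r)}\ge(1-\frac1\alpha)d$. Lemma \ref{LEMHD} gives $\mathcal H^{d/\alpha}(D_\alpha(\mu))=+\infty$; splitting $D_\alpha(\mu)=D_\alpha^{(1)}(\mu)\cup D_\alpha^{(2)}(\mu)$ according to whether the $\limsup$ equals or strictly exceeds $(1-\frac1\alpha)d$, covering $D_\alpha^{(2)}(\mu)$ by $\bigcup_n\mathcal E(\beta_n,\mu)$ for $\beta_n\downarrow(1-\frac1\alpha)d$, and using Corollary \ref{CORAUBRYLIKE} (which applies verbatim to measures) to kill the $\mathcal H^{d/\alpha}$-measure of each $\mathcal E(\beta_n,\mu)$, we conclude $\mathcal H^{d/\alpha}(D_\alpha^{(1)}(\mu))=+\infty$, so $\dimh(E((1-\frac1\alpha)d,\mu))\ge d/\alpha$; the reverse inequality is again Corollary \ref{CORAUBRYLIKE}, and letting $\alpha$ range over $(1,\infty)$ covers every $\beta=(1-\frac1\alpha)d\in[0,d)$, with $\beta=d$ trivial. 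I expect the only real obstacle to be the uniform-in-$\xi$ weak-star approximation step described above; everything else transfers mechanically, which is exactly why the authors can dispose of it in one sentence ("Mimicking the proof of Theorem \ref{THMMAIN2}$\dots$").
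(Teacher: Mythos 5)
Your proposal is correct and follows essentially the same approach as the paper: a Baire category argument in the weak-star compact metrizable space $B_{\mathcal M(\sd)}$, built around a dense sequence of measures $h_n\,d\sigma$ (with $h_n$ a perturbation of $g_n$ by a small multiple of the saturating function $f_n$), where the crucial observation is that for each fixed $r_n$ the family $\{P(r_n\xi,\cdot):\xi\in\sd\}$ is norm-compact in $\mathcal C(\sd)$, so weak-star closeness to $h_n\,d\sigma$ controls $\sup_{\xi\in\sd}\lvert P[\mu](r_n\xi)-P[h_n](r_n\xi)\rvert$. The only cosmetic difference is that you encode the weak-star neighbourhoods as metric balls $B_\rho(h_n\,d\sigma,\eta_n)$, while the paper constructs them explicitly as finite intersections of subbasic weak-star open sets indexed by a finite $1$-net $y_1,\dots,y_s$ of $\sd$; both routes rest on the same compactness/uniform-continuity fact.
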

\begin{proof}
Let $(g_n)_{n\ge 1}$ be a dense sequence of the unit ball of $\mathcal C(\sd)$ such that $\|g_n\|_\infty\leq 1-\frac1n$. The sequence $(g_nd\sigma)_{n\ge 1}$ is weak-star dense in $B_{\mathcal M(\sd)}$. Let $(f_n)_{n\ge 1}$ be the sequence given by Lemma \ref{LEMSAT} and let us set
$$h_n=g_n+\frac1n f_n$$
so that $(h_nd\sigma)_{n\ge 1}$ lives in the unit ball $B_{\mathcal M(\sd)}$ and is always a weak-star dense sequence in $B_{\mathcal M(\sd)}$. For any $\alpha>1$ and any $y\in D_{n,\alpha}$, 
\begin{eqnarray*}
P[h_n](r_ny)&\geq& C \frac{2^{\left(n-N_{n,\alpha} \right)d}}{n^2}-1
\end{eqnarray*}
with $r_n=1- 2^{-n}$. The function $(y,\xi)\mapsto P(r_n y,\xi)$ is uniformly continuous on $\sd\times\sd$. In particular, using the compactness of $\sd$, one may find $y_1,\dots,y_s\in\sd$ such that, for any $y\in\sd$, 
there exists $j\in\{1,\dots,s\}$ satisfying
$$\forall \xi\in\sd,\quad \big|P(r_ny,\xi)-P(r_ny_j,\xi)\big|\leq 1.$$
Let $\mathcal U_n$ be the following weak-star open neighbourhood of $h_nd\sigma$ in
$B_{\mathcal M(\sd)}$:
\begin{eqnarray*}
\mathcal U_n&=&\Bigg\{\mu\in B_{\mathcal M(\sd)};\ \textrm{for all }j\in\{1,\dots,s\},\\
&&\quad\quad \left|\int_{\sd}P(r_n y_j,\xi)d\mu-\int_{\sd}P(r_n y_j,\xi)h_n(\xi)d\sigma\right|<1\Bigg\}.
\end{eqnarray*}
By the triangle inequality, for any $y\in\sd$ and any $\mu\in\mathcal U_n$,
$$|P[\mu-h_nd\sigma]|(r_ny)|\leq 3.$$
We now define $A=\bigcap_{l\ge 1}\bigcup_{n\geq l}\mathcal U_n$ which is a
dense 
$G_\delta$-subset
of $B_{\mathcal M(\sd)}$, and we conclude as in the proof of Theorem \ref{THMMAIN2}.
\end{proof}
If we remember that $\mu\mapsto P[\mu]$ is a bijection between the set of
nonnegative finite measures on the sphere $\sd$ and the set of 
nonnegative harmonic functions in the ball 
$B_{d+1}$ we can also obtain the following result.
\begin{theorem}\label{THMPOSITIVE}
For quasi-all nonnegative harmonic functions $h$ in the unit ball $B_{d+1}$, for any
$\beta\in [0,d]$,
$$\dimh\big(E(\beta,h)\big)=d-\beta$$
where $E(\beta,h)$ is defined here by $\displaystyle E(\beta,h)=\left\{ y\in\sd\ ;\ \limsup_{r\to 1}\frac{\log h(ry)}{-\log(1-r)}=\beta\right\}$. 
\end{theorem}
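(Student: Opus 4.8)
The plan is to carry the argument of Theorem~\ref{THMMAIN2} (in the measure-valued form established just above) into the cone of nonnegative harmonic functions, via the Herglotz correspondence $h=P[\mu]$, which is a bijection between the set $\mathcal H^{+}$ of nonnegative harmonic functions on $B_{d+1}$ and the set $\mathcal M^{+}(\sd)$ of nonnegative finite measures on $\sd$. The first point is to fix the right Baire space. I would equip $\mathcal H^{+}$ with the topology of uniform convergence on compact subsets of $B_{d+1}$; since a locally uniform limit of harmonic (resp.\ of nonnegative harmonic) functions is harmonic (resp.\ nonnegative harmonic), $\mathcal H^{+}$ is a closed subspace of the Fr\'echet space of all harmonic functions on $B_{d+1}$, hence completely metrizable and separable, in particular a Baire space. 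Moreover $\mu\mapsto P[\mu]$ is a homeomorphism from $\mathcal M^{+}(\sd)$, endowed with the weak-$*$ topology, onto $\mathcal H^{+}$: it is continuous because $\xi\mapsto P(x,\xi)$ runs through an equicontinuous, uniformly bounded family of functions as $x$ stays in a compact subset of $B_{d+1}$; conversely, if $P[\mu_n]\to P[\mu]$ locally uniformly then $\mu_n(\sd)=P[\mu_n](0)\to\mu(\sd)$, so the $\mu_n$ remain in a weak-$*$ compact metrizable ball, and every weak-$*$ cluster point $\nu$ of $(\mu_n)$ satisfies $P[\nu]=P[\mu]$, hence $\nu=\mu$ by injectivity of the Poisson transform. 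Thus "quasi-all $h$'' is the same as "quasi-all $\mu\in\mathcal M^{+}(\sd)$'', and, $P[\mu]$ being nonnegative, $E(\beta,h)=E(\beta,\mu)$. Since $\dimh(E(\beta,\mu))\le d-\beta$ for every $\mu$ by Corollary~\ref{CORAUBRYLIKE}, it only remains to prove that for quasi-all $\mu$ and every $\beta\in(0,d)$ one has $\dimh(E(\beta,\mu))\ge d-\beta$.

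For this I would reproduce the construction of Theorem~\ref{THMMAIN2} almost verbatim. Fix a weak-$*$ dense sequence $(g_n)$ in $\mathcal M^{+}(\sd)$, let $(f_n)$ be the functions supplied by Lemma~\ref{LEMSAT}, and put $h_n=g_n+\frac1n f_n\,d\sigma\ge 0$; since the total variation of $\frac1n f_n\,d\sigma$ equals $\frac1n\to 0$, the sequence $(h_n)$ is again weak-$*$ dense. The one genuine simplification over the signed case is that positivity forbids cancellation: for $r_n=1-2^{-n}$, $\alpha>1$ and $y\in D_{n,\alpha}$,
$$P[h_n](r_ny)=P[g_n](r_ny)+\frac1n P[f_n](r_ny)\geq \frac1n P[f_n](r_ny)\geq\frac{C}{n^{2}}2^{(n-N_{n,\alpha})d}$$
by Lemma~\ref{LEMSAT}. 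As in the measure-valued theorem, I would use the uniform continuity of $(y,\xi)\mapsto P(r_ny,\xi)$ on $\sd\times\sd$ to select finitely many points $y_1,\dots,y_{s_n}\in\sd$ such that each $y\in\sd$ satisfies $\sup_{\xi}|P(r_ny,\xi)-P(r_ny_j,\xi)|\le\bigl(h_n(\sd)+1\bigr)^{-1}$ for some $j$, and set
$$\mathcal U_n=\Bigl\{\mu\in\mathcal M^{+}(\sd):\ |\mu(\sd)-h_n(\sd)|<1\ \text{ and }\ \bigl|\int P(r_ny_j,\xi)\,d(\mu-h_n)(\xi)\bigr|<1\ \ \forall j\Bigr\},$$
a weak-$*$ open neighbourhood of $h_n$. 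The mass constraint forces $\mu(\sd)<h_n(\sd)+1$ for every $\mu\in\mathcal U_n$, so by the triangle inequality $|P[\mu](r_ny)-P[h_n](r_ny)|\le 3$ for all $y$; hence, for $y\in D_{n,\alpha}$ and $n$ large enough (depending on $\alpha$, exactly as in Theorem~\ref{THMMAIN2}, because $2^{(n-N_{n,\alpha})d}$ grows exponentially), $P[\mu](r_ny)\ge\frac{C}{2n^{2}}2^{(n-N_{n,\alpha})d}$.

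From this point on the proof is that of Theorem~\ref{THMMAIN2}. The set $A=\bigcap_{l\ge1}\bigcup_{n\ge l}\mathcal U_n$ is a dense $G_\delta$ in $\mathcal M^{+}(\sd)$; given $\mu\in A$, choose $n_k\uparrow\infty$ with $\mu\in\mathcal U_{n_k}$, and for each $\alpha>1$ and each $y\in D_\alpha(\mu):=\limsup_k D_{n_k,\alpha}$ the estimate above yields $\limsup_{r\to1}\frac{\log P[\mu](ry)}{-\log(1-r)}\ge\bigl(1-\frac1\alpha\bigr)d$. Since $\mathcal H^{d/\alpha}(D_\alpha(\mu))=+\infty$ by Lemma~\ref{LEMHD}, and since the part of $D_\alpha(\mu)$ where this $\limsup$ exceeds $\bigl(1-\frac1\alpha\bigr)d$ is contained in $\bigcup_n\mathcal E(\beta_n,\mu)$ for a sequence $\beta_n\downarrow\bigl(1-\frac1\alpha\bigr)d$ with $\beta_n>\bigl(1-\frac1\alpha\bigr)d$, hence is $\mathcal H^{d/\alpha}$-negligible by Corollary~\ref{CORAUBRYLIKE} (as $\frac d\alpha>d-\beta_n$), the remaining part has infinite $\mathcal H^{d/\alpha}$-measure and is contained in $E\bigl((1-\frac1\alpha)d,\mu\bigr)$. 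Therefore $\dimh\bigl(E((1-\frac1\alpha)d,\mu)\bigr)\ge\frac d\alpha$, and Corollary~\ref{CORAUBRYLIKE} turns this into an equality; letting $\alpha$ vary over $(1,+\infty)$ covers all $\beta=(1-\frac1\alpha)d\in(0,d)$. The endpoint $\beta=d$ is free since $\dimh(E(d,\mu))\le 0$, and $\beta=0$ is treated as in Theorem~\ref{THMMAIN2}. The only places where the adaptation is not literally verbatim are the identification of the Baire space of nonnegative harmonic functions with $\mathcal M^{+}(\sd)$ under the weak-$*$ topology, and the insertion of the mass bound into $\mathcal U_n$ to compensate for the fact that $\mathcal M^{+}(\sd)$ is not weak-$*$ bounded; I expect this bookkeeping, rather than any conceptual difficulty, to be the main obstacle.
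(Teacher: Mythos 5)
Your argument is correct, but it takes a genuinely different route from the paper's. You transfer the problem to $\mathcal M^{+}(\sd)$ with the weak-$*$ topology, prove that $\mu\mapsto P[\mu]$ is a homeomorphism onto $\mathcal H^{+}$, and then rerun the measure-valued proof with a mass bound $|\mu(\sd)-h_n(\sd)|<1$ adjoined to the neighbourhoods $\mathcal U_n$ (needed because $\mathcal M^{+}(\sd)$, unlike the unit ball $B_{\mathcal M(\sd)}$ used earlier, is weak-$*$ unbounded). The paper instead works directly in the cone $\mathcal H^{+}$ with the topology of locally uniform convergence: it notes that $\mathcal H^{+}$ is closed in the Fr\'echet space of continuous functions on $B_{d+1}$, hence a Baire space, proves a small density lemma (nonnegative harmonic functions continuous on $\overline{B_{d+1}}$ are dense in $\mathcal H^{+}$, obtained by the dilations $h\mapsto h(\rho_n\cdot)$) to produce a dense sequence $(P[g_n])$ with $g_n\in\mathcal C(\sd)$ nonnegative and $\|g_n\|_\infty\le n$, and then builds the residual set from the simpler neighbourhoods $\left\{h:\ \sup_{\|x\|\le r_n}|h(x)-h_n(x)|<1\right\}$. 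The paper's route avoids both the homeomorphism verification and the mass-bound bookkeeping; yours buys uniformity with the measure-valued theorem and reuses its neighbourhoods $\mathcal U_n$ almost verbatim. One small point in your version which the paper does not exploit: positivity of $P[g_n]$ lets you write $P[h_n](r_ny)\ge\frac1n P[f_n](r_ny)$ outright, whereas the paper keeps the superfluous cancellation term $-n$ in its lower bound. Both routes are sound.
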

The set $\mathcal H^+(\bd)$ of nonnegative harmonic functions in the unit ball
$\bd$ is endowed with the topology of the locally uniform convergence. It
is a closed cone in the complete vector space of all continous functions in the
ball. So it satisfies the Baire's property.
\begin{proof}[Proof of Theorem \ref{THMPOSITIVE}.]We begin with the following
  lemma.
\begin{lemma}\label{LEMDENSE}
The set of nonnegative functions which are continuous in the closed unit ball
$\overline{\bd}$ and harmonic in the open ball $\bd$ is dense in
$\mathcal H^+$.
\end{lemma}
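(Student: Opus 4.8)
The plan is to approximate by dilation, which avoids even invoking the representation $h=P[\mu]$. Fix $h\in\mathcal H^+$ and, for $s\in(0,1)$, define $h_s$ on $\overline{\bd}$ by $h_s(x)=h(sx)$. Since $h$ is harmonic on $\bd$, the function $h_s$ is harmonic on the strictly larger ball $\{x\in\rd;\ \|x\|<1/s\}$; in particular its restriction to $\overline{\bd}$ is continuous on $\overline{\bd}$ and harmonic in $\bd$, and it is nonnegative because $h$ is. Thus every $h_s$ belongs to the set whose density in $\mathcal H^+$ we wish to establish.

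It then remains to check that $h_s\to h$ as $s\to1^-$ uniformly on compact subsets of $\bd$. Given a compact $K\subset\bd$, I would pick $\rho\in(0,1)$ with $K\subset K':=\{x\in\rd;\ \|x\|\le\rho\}$. For every $s\in(0,1)$ and $x\in K$, both $x$ and $sx$ lie in $K'$, which is a compact subset of $\bd$; since $h$ is (harmonic, hence) continuous on $\bd$, it is uniformly continuous on $K'$, and because $\sup_{x\in K}\|sx-x\|=(1-s)\sup_{x\in K}\|x\|$ tends to $0$ as $s\to1^-$, one concludes $\sup_{x\in K}|h_s(x)-h(x)|\to0$. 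Hence $h$ is a locally uniform limit of elements of the set in the statement, which proves the lemma.

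There is essentially no obstacle here: the only point deserving a word of justification is the locally uniform convergence of the dilates $h_s$, and that is immediate from uniform continuity of $h$ on compact subsets of $\bd$. Alternatively one could take $h=P[\mu]$ with $\mu\in\mathcal M(\sd)$ nonnegative, mollify $\mu$ into nonnegative continuous densities $f_\veps$ with $f_\veps\,d\sigma\to\mu$ weak-star, and use that each $P[f_\veps]$ is continuous on $\overline{\bd}$ while $P[f_\veps]\to P[\mu]$ locally uniformly because $x\mapsto P(x,\cdot)$ is continuous from $\bd$ into $\mathcal C(\sd)$; but the dilation argument is shorter and self-contained.
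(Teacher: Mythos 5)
Your proof is correct and follows essentially the same route as the paper's: both approximate $h$ by the dilates $h_s(x)=h(sx)$ (the paper writes $h_n(x)=h(\rho_n x)=P[f_n](x)$ with $f_n=h(\rho_n\cdot)|_{\sd}$), and both obtain locally uniform convergence from the uniform continuity of $h$ on compact subsets of $\bd$. The only cosmetic difference is that you note the dilate is harmonic on a strictly larger ball rather than identifying it as a Poisson integral; that is a fine simplification and does not change the argument.
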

\begin{proof}
Let $h\in\mathcal H^+$ and $\rho_n<1$ be a sequence of real number that
increases to 1. Set $f_n(\xi)=h(\rho_n\xi)$ if $\xi\in\sd$ and 
$h_n(x)=h(\rho_nx)=P[f_n](x)$ if $x\in \bd$. The functions $h_n$ are
nonnegative, harmonic and continuous
on the closed ball $\overline{\bd}$. Moreover, let $\rho<1$. The uniform
continuity of $h$ in the closed ball $\bar B(0,\rho)=\{x\ ;\ \|x\|\le\rho\}$
ensures that $h_n$ converges uniformly to $h$ in the compact set $\bar
B(0,\rho)$.   
\end{proof}
We can now prove Theorem \ref{THMPOSITIVE}, using the same way as in Theorem
\ref{THMMAIN2}. Let $(g_n)_{n\ge 1}$ be a dense sequence in the set 
of nonnegative continuous functions in $\sd$. 
Lemma \ref{LEMDENSE} ensures that the sequence $(P[g_n])_{n\ge
  1}$ is dense in $\mathcal H^+$. Moreover, we can suppose that 
$\| g_n\|_\infty\le n$ so that by the 
maximum principle, $0\le P[g_n](x)\le n$ for any $x\in \bd$. Let
$(f_n)_{n\ge 1}$ be the sequence given by Lemma \ref{LEMSAT} and observe that
if $\|x\|\le\rho$,
$$\left|\frac1nP[f_n](x)\right|\le\frac2{n(1-\rho)^d}\|f_n\|_1=
\frac2{n(1-\rho)^d}.$$
It follows that $\frac1nP[f_n]$ goes to 0 in $\mathcal H^+$. Define
$$h_n=P[g_n]+\frac1n P[f_n]$$ 
so that $(h_n)_{n\ge 1}$ is always dense in $\mathcal H^+$. Let $\alpha>1$,  
$y\in D_{n,\alpha}$ and $r_n=1- 2^{-n}$. Lemma \ref{LEMSAT} ensures that
\begin{eqnarray*}
h_n(r_ny)&\geq& C \frac{2^{\left(n-N_{n,\alpha} \right)d}}{n^2}-n.
\end{eqnarray*}
We can define 
$$A=\bigcap_{l\ge 1}\bigcup_{n\ge l}\left\{ h\in \mathcal H^+\ ;\
  \sup_{\|x\|\le r_n}|h(x)-h_n(x)|<1\right\}$$
which is a dense $G_\delta$-set in $\mathcal H^+$ and we can
conclude as in the proof of Theorem \ref{THMMAIN2}.
\end{proof}

\end{document}